\newcommand{\R}{\mathbb{R}}
\newcommand{\Ha}{\mathbb{H}}
\newcommand{\Z}{\mathbb{Z}}
\newcommand{\sys}{\mathrm{sys}}
\newcommand{\Sys}{\mathrm{Sys}}
\theoremstyle{plain}
\newtheorem{theorem}{Theorem}[section]
\newtheorem{corollary}[theorem]{Corollary}
\newtheorem{lemma}[theorem]{Lemma}
\newtheorem{proposition}[theorem]{Proposition}
\theoremstyle{definition}
\newtheorem{definition}[theorem]{Definition}
\newtheorem{example}[theorem]{Example}
\theoremstyle{remark}
\numberwithin{equation}{section}
\numberwithin{figure}{section}
\title{Complexity in the Bolza Surface}
\author{}
\date{\today}
\author{Bhola Nath Saha}
\address{
Department of Mathematics and Statistics\\ 
Indian Institute of Technology  \\ 
Kanpur, Uttar Pradesh-208016\\
India}
\email{sahabholanath497@gmail.com}
\author{Bidyut Sanki}
\address{
Department of Mathematics and Statistics\\ 
Indian Institute of Technology  \\ 
Kanpur, Uttar Pradesh-208016\\
India}
\email{bidyut@iitk.ac.in}
\subjclass[2020]{Primary 57M50}
\keywords{Bolza surface, Filling system, Systole}
\begin{document}

\begin{abstract}
     A surface in the Teichm\"uller space where the systole function attains its maximum, is called a maximal surface. For genus two there exists a unique maximal surface which is called the Bolza surface. In this article, we study the complexity of the set of systolic geodesics on the Bolza surface. We show that any non-systolic geodesic intersects the systolic geodesics in $2n$ points, where $n\geq 5$. For each non-negative integer $n$, we show the existence of curves on the Bolza surface which intersect the set of systolic geodesics at $(10+6n)$ and $(12+6n)$ points by construction. Furthermore, we show that there are exactly $12$ second systolic geodesics on the Bolza surface and they form a triangulation of the surface.
\end{abstract}
\maketitle
\tikzset{->-/.style={decoration={
  markings,
  mark=at position #1 with {\arrow{>}}},postaction={decorate}}}
  \tikzset{-<-/.style={decoration={
  markings,
  mark=at position #1 with {\arrow{<}}},postaction={decorate}}}
  \section{Introduction}
  A Riemann surface of constant curvature $-1$ is called a \emph{hyperbolic surface}. A compact surface with empty boundary is called a \emph{closed surface}. In this article, by a surface we always mean a closed orientable hyperbolic surface. For $g\geq 2$, let $\mathcal{T}_g$ denote the Teichm\"uller space of hyperbolic surfaces of genus $g$ (see Section 10.1 \cite{FarbMargalit}). A shortest essential closed geodesic on a surface $X\in \mathcal{T}_g$ is called a \emph{systolic geodesic} of $X$. The function $\sys:\mathcal{T}_g\to\R_+$, where $\sys(X)$ is the length of a systolic geodesic of $X$ (see Section 1~\cite{schaller1999systoles}), is called the systole function on $\mathcal{T}_g$ and $\sys(X)$ is called the systole of $X$.

 The function $\mathrm{sys}$ has been studied extensively by P. Schmutz~\cite{schaller1999systoles,Schmutz1993}, H. Akrout~\cite{Akrout}, and others. In Theorem A of \cite{schaller1999systoles}, Schmutz Schaller provided sufficient conditions for the systole function to be a topological Morse function and analyzed its critical points (also see \cite{Schmutz1993}). Subsequently, Akrout \cite{Akrout} established that $\sys$ is indeed a topological Morse function on Teichmüller space. It is an interesting and difficult problem to find the critical points of the systole function explicitly. These are the surfaces where $\sys$ attains a local maximum. In \cite{Jenni}, Jenni has proved that there is a unique surface in $\mathcal{T}_2$, where the systole function attains its global maximum. This surface is called Bolza surface and it can be obtained by identifying the opposite sides of a regular hyperbolic octagon with interior angle  $\frac{\pi}{4}$ (Section 5~\cite{Schmutz1993}). It has $12$ systolic geodesics and this is the genus two hyperbolic surface with the biggest automorphism group. In Theorem 3.1~\cite{MR3877282}, Sanki has given an explicit description of the systolic geodesics on the Bolza surface and shown that the set of all systolic geodesics forms a triangulation of the surface.
  
 The $k$-th complexity of a given system of closed geodesics on a surface is the number of simple closed geodesics that intersect the collection exactly at $k$ many points (see Definition \ref{def 2.1}). In Theorem 3.1 of \cite{MR3877282}, the author has shown that the $k$-th complexity of $\Sys (S)$ on the Bolza surface $S$ is zero, for $0\leq k\leq5$.  In this article, we further extend the study of the complexity of the systolic geodesics of the Bolza surface. We show that the $k$-th complexity of systolic geodesics is zero when either $k< 10$ or $k$ is an odd integer in the theorem below. 
 \begin{restatable}{theorem}{mainthmone}\label{thm:3.1}
      Let $\Sys(S)$ be the set of all systolic geodesics on the Bolza surface $S$. The $k$-th complexity $\mathcal{G}_k(\Sys(S))$ of $\Sys(S)$ is zero for all odd integers $k$ and even integers $k<10$.
 \end{restatable}

 The proof of Theorem \ref{thm:3.1} is based on a detailed analysis of the lifts of systolic geodesics and related curves to the hyperbolic upper half-plane, which serves as the universal cover of the Bolza surface.

 Theorem \ref{thm:3.1} naturally leads to the question of determining those values of $k$ for which the $k$-th complexity $\mathcal{G}_k(\Sys(S))$ is nonzero. We show that there exist infinitely many integers $k$ for which $\mathcal{G}_k(\Sys(S))\neq 0$. More precisely, we prove the following theorem.
 \begin{restatable}{theorem}{mainthmtwo}\label{thm:3.8}
    $\mathcal{G}_k(\Sys(S))\neq0$ for all $k$ of the form $(10+6n)$ and $(12+6n)$, where $n$ is a non-negative integer.
 \end{restatable}

The proof of Theorem \ref{thm:3.8} is based on an explicit construction of curves that intersect $\Sys(S)$ exactly $(10+6n)$ and $(12+6n)$ times, where $n$ is a non-negative integer. The key tool in these constructions is the use of appropriate Dehn twists.

 The set $\mathrm{Spec(X)}$ of lengths of all simple closed geodesics on a surface $X\in \mathcal{T}_g$ is called the \emph{length spectrum} of $X$. The length spectrum of a surface is a discrete, closed subset of $\R_+$ (see Lemma 12.4 \cite{FarbMargalit}) and hence its elements can be arranged in an ascending order
  $$\mathrm{Spec(X)}=\{l_i\mid l_{i+1}>l_i, \text{ for all } i\in \mathbf{N}\}.$$ 
 The $i$-th element $l_i$ of $\mathrm{Spec}(X)$ is called the $i$-th systole of $X$. A simple closed geodesic of length $l_i$ is called an \textit{$i$-th systolic geodesic}. Note that, $l_1=\sys(X)$. It is a very difficult problem to find the length spectrum of a given hyperbolic surface, in general. The length spectrum of the Bolza surface has been extensively studied by R. Aurich, F. Steiner and E. B. Bogomolny (see \cite{AurBogStein, AurStein}). In particular, the authors have evaluated the $i$-th systole on the Bolza surface, for certain values of $i$. In Section 2 of \cite{AurStein}, Aurich--Steiner have shown that the length of a second systolic geodesic in the Bolza surface is $2\cosh^{-1}{\left(3+2\sqrt{2}\right)}$. In this article, we completely determine all second systolic geodesics on the Bolza surface. More precisely, we prove the following theorem.
 
\begin{restatable}{theorem}{mainthmthree}\label{lem:4.1}
    There are exactly $12$ second systolic geodesics on the Bolza surface, and they form a triangulation with each triangle of type $(4,3,3)$.
\end{restatable}

\section{Preliminaries}
  In this section, we recall the notions of complexity of a system of curves on a surface, $(p,q,r)$-triangle and hyperelliptic surfaces. Also, we describe the isometries of the Bolza surface. These are essential for the subsequent sections.
  \begin{definition} \label{def 2.1}
  For a non-negative integer $k$, the \emph{$k$-th complexity} of a system of curves $\Omega$ on a surface $X$, denoted by $\mathcal{G}_k(\Omega)$, is the cardinality of the set below
  $$\left\{\gamma\in \mathcal{C}(X)\setminus \Omega \ \big|  \sum_{\alpha\in \Omega}i(\alpha,\gamma)= k\right\},$$
  where $\mathcal{C}(X)$ denotes the collection of all simple closed curves on $X$, pairwise distinct up to free homotopy.
  \end{definition}
  \begin{example}
    Let $\Sigma=\{\alpha,\gamma\}$, where $\alpha$ and $\gamma$ are as described in Figure \ref{a_sep_fill_on_S_2}. Then it is not difficult to find a simple closed curve $\delta$ that intersects $\gamma$ in $k$ points and is disjoint from $\alpha$. Consider the curves $T_\gamma^n(\delta)$, where $n\in \mathbb{Z}$ and $T_\gamma$ denotes the Dehn twist (see Chapter 3, \cite{FarbMargalit}) about $\gamma$. Then 
    $$i\bigl(T_\gamma^n(\delta), \Sigma\bigr)=k,$$
    for all $n\in\mathbb{Z}$ and hence $\mathcal{G}_k(\Sigma)=\infty,$ for all $k\in \mathbb{Z}_{\geq 0}$.
  
 Consider $\Omega=\{\alpha,\beta\}$, where $\alpha$ and $\beta$ are simple closed curves as shown in Figure~\ref{a_sep_fill_on_S_2}. We find the 0-th and the 1-st complexity of $\Omega$. Since these curves fill the surface, we have
$$\mathcal{G}_0(\Omega)=0.$$

To compute the first complexity of $\Omega$, consider the decomposition of the surface obtained by cutting along $\alpha\cup\beta$. This decomposition consists of a disjoint union of four polygons, as shown in Figure~\ref{the_disjoint_union_of_4_disks}. Consider the arc shown by a dashed line in the upper-left octagon, joining two points on the pair of edges labeled $y_2$. We choose the end points of the arc so that they are identified in the surface and therefore this arc projects to a simple closed curve on the surface that intersects $\Omega$ exactly once. Similarly, there is another such curve obtained by joining two points on the pair of edges labeled $y_5$ in the other octagon. These are the only simple closed curves that intersect $\Omega$ exactly once. Therefore,
$$\mathcal{G}_1(\Omega)=2.$$

  \begin{figure}[htbp]
    \begin{center}
    \begin{tikzpicture}[xscale=2,yscale=1.7]
        \draw  (0,0) ellipse (3cm and 1.2cm);
        
        \draw (-2,.05) to [bend right] (-1.2,.05);
        \draw (-2,.05) to [bend left] (-1.2,.05);
        
        \draw (1.2,.05) to [bend right] (2,.05);
        \draw (1.2,.05) to [bend left] (2,.05);
        
        \draw [red,-<-=.5] (0,1.2) arc
            [
                start angle=90,
                end angle=270,
                x radius=.3cm,
                y radius =1.2cm
            ] ;
        \draw [dashed, red] (0,-1.2) arc
            [
                start angle=270,
                end angle=450,
                x radius=.3cm,
                y radius =1.2cm
            ] ;
        \draw (-.1,.9) node {\tiny $A$};
        \draw (-.15,.48) node {\tiny $B$};
        \draw (-.16,-.25) node {\tiny $C$};
        \draw (-.15,-.75) node {\tiny $D$};
        \draw (.3,-1) node {\tiny $E$};
        \draw (.3,.95) node {\tiny $F$};
        
        \draw[red] (-.4,.3) node {$\alpha$};
        \draw[cyan,thick] (-1,.9) node {$\beta$};
        
        \draw [-<-=.35, cyan,thick] (2.3,0) arc
            [
                start angle=0,
                end angle=150,
                x radius=2.5cm,
                y radius =.8cm
            ] ;
        \draw[cyan,thick] (-2.36,.402).. controls (-3.2,0)and(-2.2,-.95)..(-1.2,-1.09);
        
        \draw [dashed, cyan,thick] (-1.2,-1.09) arc
            [
                start angle=265,
                end angle=360,
                x radius=2.2cm,
                y radius =1.12cm
            ] ;
        \draw [cyan,thick] (1.2,0) arc
            [
                start angle=0,
                end angle=260,
                x radius=1.7cm,
                y radius =.6cm
            ] ;
        \draw [cyan,thick] (-.8,-.59) arc
            [
                start angle=240,
                end angle=300,
                x radius=2.3cm,
                y radius =.5cm
            ] ;
        \draw[cyan,thick] (1.5,-.59)..controls (3,-.5) and (3,.5)..(1.45,1.045);
            ] ;
        \draw [dashed, cyan,thick] (1.45,1.045) arc
            [
                start angle=70,
                end angle=180,
                x radius=1.98cm,
                y radius =1.12cm
            ] ;
        \draw [cyan,thick] (-1.211,0) arc
            [
                start angle=180,
                end angle=360,
                x radius=1.752cm,
                y radius =.4cm
            ] ;

             \draw [red] (-1.8,-.05) arc
    [
    start angle=90,
    end angle=270,
    x radius=.12cm,
    y radius =.44cm
    ] ;
    \draw [red,dashed] (-1.8,-.93) arc
    [
    start angle=270,
    end angle=450,
    x radius=.12cm,
    y radius =.44cm
    ] ;
        
        \draw (1.1,.8) node {\footnotesize$y_1$};
        \draw (-1,-.32) node {\footnotesize$y_2$};
        \draw (2.73,0) node {\footnotesize$y_3$};
        \draw (-2.35,-.0) node {\footnotesize$y_4$};
        \draw (1.3,.2) node {\footnotesize$y_5$};
        \draw (-1.6,.78) node {\footnotesize$y_6$};
        
        \draw (-.3,1.1) node {\footnotesize$x_1$};
        \draw (.45,0) node {\footnotesize$x_2$};
        \draw (-.37,-.9) node {\footnotesize$x_3$};
        \draw (-.45,-.5) node {\footnotesize$x_4$};
        \draw (-.15,0) node {\footnotesize$x_5$};
        \draw (-.38,.67) node {\footnotesize$x_6$};

        \draw[red] (-2.05,-.5) node {\small$\gamma$};
    \end{tikzpicture}
    \end{center}
    \caption{Curves on $S_2$.} 
    \label{a_sep_fill_on_S_2}
    \end{figure}
     
    \begin{figure}[htbp]
    \begin{center}
    \begin{tikzpicture}[xscale=.75,yscale=.75]
        \draw[red,->-=.5] (2,0) .. controls (1.5705,0.656) .. (1.4142,1.4142);
        \draw[thick,cyan,->-=.5] (1.4142,1.4142).. controls (0.6778,1.5705) .. (0,2);
        \draw[red,->-=.5] (0,2).. controls (-0.6778,1.5705) .. (-1.4142,1.4142);
        \draw[thick,cyan,-<-=.5] (-1.4142,1.4142).. controls (-1.5705,0.656) .. (-2,0);
        \draw[red,->-=.5] (-2,0).. controls (-1.5705,-0.656) .. (-1.4142,-1.4142);
        \draw[thick,cyan,-<-=.5] (-1.4142,-1.4142).. controls (-0.6778,-1.5705) .. (0,-2);
        \draw[red,->-=.5] (0,-2).. controls (0.6778,-1.5705) .. (1.4142,-1.4142);
        \draw[thick,cyan,->-=.5] (1.4142,-1.4142).. controls (1.5705,-0.656) .. (2,0);
        
        \draw (2.05,0.656) node {\tiny $x_2$} (0.7,1.9) node {\tiny $y_6$} (-0.7,1.9) node {\tiny $x_1$} (-2,0.6) node {\tiny $y_2$} (-1.95,-0.75) node {\tiny $x_5$} (-0.9,-1.85) node {\tiny $y_4$} (0.9,-1.85) node {\tiny $x_4$} (1.95,-0.75) node {\tiny $y_2$};
        
        \draw[thick, cyan,-<-=.5] (9,0) .. controls (8.5705,0.656) .. (8.4142,1.4142);
        \draw[red,-<-=.5] ({7+1.4142},1.4142).. controls ({7+0.6778},1.5705) .. ({7+0},2);
        \draw[thick, cyan,-<-=.5] ({7+0},2).. controls ({7-0.6778},1.5705) .. ({7-1.4142},1.4142);
        \draw[red,-<-=.5] ({7-1.4142},1.4142).. controls ({7-1.5705},0.656) .. ({7-2},0);
        \draw[thick, cyan,->-=.5] ({7-2},0).. controls ({7-1.5705},-0.656) .. ({7-1.4142},-1.4142);
        \draw[red,-<-=.5] ({7-1.4142},-1.4142).. controls ({7-0.6778},-1.5705) .. ({7+0},-2);
        \draw[thick, cyan,->-=.5] ({7+0},-2).. controls ({7+0.6778},-1.5705) .. ({7+1.4142},-1.4142);
        \draw[red,-<-=.5] ({7+1.4142},-1.4142).. controls ({7+1.5705},-0.656) .. ({7+2},0);
        
        \draw ({7+2.05},0.656) node {\tiny $y_5$} ({7+0.7},1.9) node {\tiny $x_5$} ({7-0.7},1.95) node {\tiny $y_1$} ({7-2},0.6) node {\tiny $x_6$} ({7-1.95},-0.75) node {\tiny $y_5$} ({7-0.9},-1.85) node {\tiny $x_2$} ({7+0.9},-1.85) node {\tiny $y_3$} ({7+1.95},-0.75) node {\tiny $x_3$};
        
        \draw[-<-=.5,cyan,thick] (2,-5) .. controls (0.8,-4.2) .. (0,-3);
        \draw[->-=.5,red] (0,-3).. controls (-.8,-4.2) .. (-2,-5);
        \draw[->-=.5,cyan,thick](-2,-5).. controls (-.8,-5.8) .. (0,-7);
        \draw[->-=.5,red](0,-7) .. controls (0.8,-5.8) .. (2,-5);
        
        \draw (1.3,-4) node {\tiny $y_6$} (-1.2,-4) node {\tiny $x_3$} (-1,-6.2) node {\tiny $y_4$} (1.2,-6.2) node {\tiny $x_6$};
        
        \draw[-<-=.5,red] (9,-5) .. controls (7.8,-4.2) .. (7,-3);
        \draw[-<-=.5,cyan,thick] (7,-3).. controls (6.2,-4.2) .. (5,-5);
        \draw[-<-=.5,red](5,-5).. controls (6.2,-5.8) .. (7,-7);
        \draw[->-=.5,cyan,thick](7,-7) .. controls (7.8,-5.8) .. (9,-5);

        \draw[thick, dashed] (157.5:1.8) to (337.5:1.8);

        \draw (8.3,-4) node {\tiny $x_4$} (5.8,-4) node {\tiny $y_3$} (6,-6.2) node {\tiny $x_1$} (8.2,-6.2) node {\tiny $y_1$};
        
        \draw (0,2.2) node {\tiny A} (1.57,1.57) node{\tiny E} (2.2,0) node{\tiny F}(1.57,-1.57) node{\tiny C} (-.1,-2.2) node{\tiny B} (-1.57,-1.57) node{\tiny D} (-2.2,0) node{\tiny C} (-1.57,1.57) node{\tiny F};
        \draw (7,2.2) node {\tiny C} (8.57,1.57) node{\tiny B} (9.2,0) node{\tiny E}(8.57,-1.57) node{\tiny D} (6.9,-2.2) node{\tiny F} (5.43,-1.57) node{\tiny E} (4.8,0) node{\tiny B} (5.43,1.57) node{\tiny A};
        \draw (.1,-2.8) node {\tiny E} (2.2,-5) node{\tiny A} (0,-7.2) node{\tiny B} (-2.2,-5) node {\tiny D};
        \draw (7.1,-2.8) node {\tiny D} (9.2,-5) node{\tiny C} (7,-7.2) node{\tiny A} (4.8,-5) node {\tiny F};
    \end{tikzpicture}
    \end{center}
    \caption{Topological disks after cutting $S_2$ along $\alpha\cup\beta$.} 
    \label{the_disjoint_union_of_4_disks}
    \end{figure}

  \end{example}
  \begin{definition}
  Given positive integers $p,q,r$, satisfying $\frac{1}{p}+\frac{1}{q}+\frac{1}{r}<1$, a $(p,q,r)$-triangle is a hyperbolic triangle with angles $\frac{\pi}{p},\frac{\pi}{q},\frac{\pi}{r}$.
  \end{definition}
  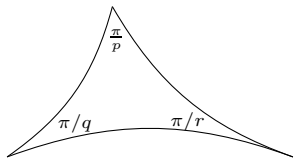
\begin{figure}[htbp]
      \centering
      \begin{tikzpicture}
          \draw (-.5,1) [bend left=20] to (-1.9,-1);
          \draw (-1.9,-1) [bend left=20] to (1.9,-1);
          \draw (1.9,-1) [bend left=20] to (-.5,1);
          \draw (-.45,.55) node {\tiny$\frac{\pi}{p}$};
          \draw (-1,-.55) node {\tiny$\pi/q$};
          \draw (.5,-.53) node {\tiny$\pi/r$};
      \end{tikzpicture}
      \caption{A hyperbolic triangle of type $(p,q,r)$.}
      \label{fig:enter-label}
  \end{figure}
  \subsection{Isometries of the Bolza surface}
Consider a regular hyperbolic octagon with each interior angle $\frac{\pi}{4}$ and equipped with a side paring as indicated in Figure \ref{fig:triang}. After identification of the sides in pairs, the resulting surface is the Bolza surface (see Section 5 \cite{Schmutz1993}). The rotation of the octagon about the center by an angle $\frac{\pi}{4}$ is side paring equivariant and hence induces an isometry of order eight, denoted by $R$.

\begin{figure}[htbp]
          \centering
          \begin{tikzpicture}
          \draw [bend left] ({2*cos(22.7)},{2*sin(22.7)}) to ({2*cos(67.5)},{2*sin(67.5)});
          \draw [bend left] ({2*cos(67.5)},{2*sin(67.5)}) to ({2*cos(112.5)},{2*sin(112.5)});
          \draw [bend left] ({2*cos(112.5)},{2*sin(112.5)}) to ({2*cos(157.5)},{2*sin(157.5)});
          \draw [bend left] ({2*cos(157.5)},{2*sin(157.5)}) to ({2*cos(202.5)},{2*sin(202.5)});
          \draw [bend left] ({2*cos(202.5)},{2*sin(202.5)}) to ({2*cos(247.5)}, {2*sin(247.5)});
          \draw [bend left] ({2*cos(247.5)}, {2*sin(247.5)}) to ({2*cos(292.5)},{2*sin(292.5)});
          \draw [bend left] ({2*cos(292.5)},{2*sin(292.5)}) to ({2*cos(337.5)},{2*sin(337.5)});
          \draw [bend left] ({2*cos(337.5)},{2*sin(337.5)}) to ({2*cos(22.7)},{2*sin(22.7)});

          \draw[bend left] ({1.62*cos(0)},{1.62*sin(0)}) to ({1.62*cos(45)},{1.62*sin(45)});
          \draw [bend left] ({1.62*cos(45)},{1.62*sin(45)}) to ({1.62*cos(90)},{1.62*sin(90)});
          \draw [bend left] ({1.62*cos(90)},{1.62*sin(90)}) to ({1.62*cos(135)},{1.62*sin(135)});
          \draw [bend left] ({1.62*cos(135)},{1.62*sin(135)}) to ({1.62*cos(180)},{1.62*sin(180)});
          \draw [bend left] ({1.62*cos(180)},{1.62*sin(180)}) to ({1.62*cos(225)},{1.62*sin(225)});
          \draw [bend left] ({1.62*cos(225)},{1.62*sin(225)}) to ({1.62*cos(270)},{1.62*sin(270)});
          \draw [bend left] ({1.62*cos(270)},{1.62*sin(270)}) to ({1.62*cos(315)},{1.62*sin(315)});
          \draw [bend left] ({1.62*cos(315)},{1.62*sin(315)}) to ({1.62*cos(0)},{1.62*sin(0)});

          \draw ({1.62*cos(0)},{1.62*sin(0)})--({1.62*cos(180)},{1.62*sin(180)});
          \draw ({1.62*cos(45)},{1.62*sin(45)})--({1.62*cos(225)},{1.62*sin(225)});
          \draw ({1.62*cos(90)},{1.62*sin(90)})--({1.62*cos(270)},{1.62*sin(270)});
          \draw ({1.62*cos(135)},{1.62*sin(135)})--({1.62*cos(315)},{1.62*sin(315)});

          \draw ({1.9*cos(-11.25)},{1.9*sin(-11.25)}) node {\tiny$c_1$} ({1.9*cos(11.25)},{1.9*sin(11.25)}) node {\tiny$c_2$};
          \draw ({1.9*cos(33.75)},{1.9*sin(33.75)}) node {\tiny$d_1$} ({1.9*cos(56.25)},{1.9*sin(56.25)}) node {\tiny$d_2$};
          \draw ({1.9*cos(78.75)},{1.9*sin(78.75)}) node {\tiny$a_2$} ({1.9*cos(101.25)},{1.9*sin(101.25)}) node {\tiny$a_1$};
          \draw ({1.9*cos(123.75)},{1.9*sin(123.75)}) node {\tiny$b_2$} ({1.9*cos(146.25)},{1.9*sin(146.25)}) node {\tiny$b_1$};
         \draw ({1.9*cos(168.75)},{1.9*sin(168.75)}) node {\tiny$c_2$} ({1.9*cos(191.25)},{1.9*sin(191.25)}) node {\tiny$c_1$};
         \draw ({1.9*cos(213.75)},{1.9*sin(213.75)}) node {\tiny$d_2$} ({1.9*cos(236.25)},{1.9*sin(236.25)}) node {\tiny$d_1$};
         \draw ({1.9*cos(258.75)},{1.9*sin(258.75)}) node {\tiny$a_1$} ({1.9*cos(281.25)},{1.9*sin(281.25)}) node {\tiny$a_2$};
         \draw ({1.9*cos(303.75)},{1.9*sin(303.75)}) node {\tiny$b_1$} ({1.9*cos(326.25)},{1.9*sin(326.25)}) node {\tiny$b_2$};

         \draw ({1.5*cos(22.5)},{1.5*sin(22.75}) node {\tiny$e_1$} ({1.5*cos(67.5)},{1.5*sin(67.5)}) node {\tiny$e_2$} ({1.5*cos(112.5)},{1.5*sin(112.5)}) node {\tiny$e_3$} ({1.5*cos(157.5)},{1.5*sin(157.5)}) node {\tiny$e_4$} ({1.5*cos(202.5)},{1.5*sin(202.5)}) node {\tiny$e_5$} ({1.5*cos(247.5)},{1.5*sin(247.5)}) node {\tiny$e_6$} ({1.5*cos(292.5)},{1.5*sin(292.5)}) node {\tiny$e_7$} ({1.5*cos(337.5)},{1.5*sin(337.5)}) node {\tiny$e_8$};

         \draw ({.9*cos(10)},{.9*sin(10)}) node {\tiny$f_1$} ({.9*cos(55)},{.9*sin(55)}) node {\tiny$f_2$} ({.9*cos(100)},{.9*sin(100)}) node {\tiny$f_3$} ({.9*cos(145)},{.9*sin(145)}) node {\tiny$f_4$} ({.9*cos(190)},{.9*sin(190)}) node {\tiny$f_5$} ({.9*cos(235)},{.9*sin(235)}) node {\tiny$f_6$} ({.9*cos(280)},{.9*sin(280)}) node {\tiny$f_7$} ({.9*cos(325)},{.9*sin(325)}) node {\tiny$f_8$};

         \draw[->-=1,thick] ({.4*cos(30)},{.4*sin(30)}) arc
    [
        start angle=30,
        end angle=270,
        x radius=.4cm,
        y radius =.4cm
    ] ;
          
          \end{tikzpicture}
          \caption{Triangulation of the octagon by $(4,4,4)$ triangles and isometry of order $8$.}
          \label{fig:triang}
\end{figure}
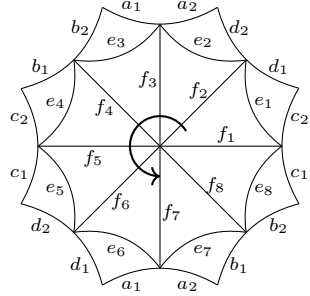

 Now, we describe an order $3$ isometry of the Bolza surface. For this, we consider the triangulated polygonal representation of the Bolza surface as given in Figure \ref{fig:order3}, where each triangle is of the form $(4,4,4)$. We note that this representation is obtained from the representation in Figure \ref{fig:triang} by cut and paste and therefore, they are equivalent. The rotations of the polygons about the centers by an angle $\frac{2\pi}{3}$ as indicated in Figure \ref{fig:order3} respect the side paring and hence induce an order $3$ isometry $L$ of the Bolza surface. It is straightforward to see that the isometries $R$ and $L$ generate the full orientation preserving isometry group of the Bolza surface. Moreover, this group is a von Dyck group (for details, see Chapter~5 of \cite{coxeter1980generators} and Section~1 of \cite{Stoytchev2020}).

\begin{figure}[htbp]
    \centering
    \begin{tikzpicture}[xscale=1,yscale=1]
        \draw (-1,0)--(-1.4,1.38)--(0,{sqrt(3)});
        \draw (1,0)--(1.4,1.38)--(0,{sqrt(3)});
        \draw[dashed] (-1,0)--(1,0)--(0,{sqrt(3)})--(-1,0);
        \draw (-1,0)--(0,-1.04)--(1,0);

        \draw (-1+5,0)--(-1.4+5,1.38)--(0+5,{sqrt(3)});
        \draw (1+5,0)--(1.4+5,1.38)--(0+5,{sqrt(3)});
        \draw[dashed] (-1+5,0)--(1+5,0)--(0+5,{sqrt(3)})--(-1+5,0);
        \draw (-1+5,0)--(0+5,-1.04)--(1+5,0);

        \draw (-1,0-4)--(-1.4,1.38-4)--(0,{sqrt(3)-4});
        \draw (1,0-4)--(1.4,1.38-4)--(0,{sqrt(3)-4});
        \draw[dashed] (-1,0-4)--(1,0-4)--(0,{sqrt(3)-4})--(-1,0-4);
        \draw (-1,0-4)--(0,-1.04-4)--(1,0-4);

        \draw (-1+5,0-4)--(-1.4+5,1.38-4)--(0+5,{sqrt(3)-4});
        \draw (1+5,0-4)--(1.4+5,1.38-4)--(0+5,{sqrt(3)-4});
        \draw[dashed] (-1+5,0-4)--(1+5,0-4)--(0+5,{sqrt(3)-4})--(-1+5,0-4);
        \draw (-1+5,0-4)--(0+5,-1.04-4)--(1+5,0-4);

        \draw (-.9,1.65) node {\tiny$e_2$} (.9,1.65) node {\tiny$d_1$} (1.4,.75) node{\tiny$c_2$} (.6,-.65) node{\tiny$e_8$} (-1.4,.75) node{\tiny$f_3$} (-.6,-.65) node{\tiny$f_8$} (.73,.86) node {\tiny$e_1$} (-.73,.86) node {\tiny$f_2$} (0,-.16) node {\tiny$f_1$};

        \draw (-.9+5,1.65) node {\tiny$c_1$} (.9+5,1.65) node {\tiny$d_1$} (1.4+5,.75) node{\tiny$e_6$} (.6+5,-.65) node{\tiny$f_3$} (-1.4+5,.75) node{\tiny$e_8$} (-.6+5,-.65) node{\tiny$f_4$} (.73+5,.86) node {\tiny$a_1$} (-.73+5,.86) node {\tiny$b_2$} (0+5,-.16) node {\tiny$e_3$};

        \draw (-.9,1.67-4) node {\tiny$f_4$} (.9,1.69-4) node {\tiny$f_7$} (1.4,.75-4) node{\tiny$e_6$} (.6,-.65-4) node{\tiny$d_2$} (-1.4,.75-4) node{\tiny$e_4$} (-.6,-.65-4) node{\tiny$c_1$} (.73,.86-4) node {\tiny$f_6$} (-.73,.86-4) node {\tiny$f_5$} (0,-.16-4) node {\tiny$e_5$};

        \draw (-.9+5,1.65-4) node {\tiny$f_8$} (.9+5,1.65-4) node {\tiny$e_4$} (1.4+5,.75-4) node{\tiny$c_2$} (.6+5,-.65-4) node{\tiny$d_2$} (-1.4+5,.75-4) node{\tiny$f_7$} (-.6+5,-.65-4) node{\tiny$e_2$} (.73+5,.86-4) node {\tiny$b_1$} (-.73+5,.86-4) node {\tiny$e_7$} (0+5,-.16-4) node {\tiny$a_2$};

        \draw (0,.57) node {\tiny$\bullet$} (0+5,.57) node {\tiny$\bullet$} (0,.57-4) node {\tiny$\bullet$} (0+5,.57-4) node {\tiny$\bullet$};

        \draw[->-=1,thick] ({0+.3*cos(30)},{.57+.3*sin(30)}) arc
    [
        start angle=30,
        end angle=270,
        x radius=.3cm,
        y radius =.3cm
    ] ;
    \draw[-<-=.1,thick] ({5+.3*cos(30)},{.57+.3*sin(30)}) arc
    [
        start angle=30,
        end angle=270,
        x radius=.3cm,
        y radius =.3cm
    ] ;
    \draw[->-=1,thick] ({0+.3*cos(30)},{.57+.3*sin(30)-4}) arc
    [
        start angle=30,
        end angle=270,
        x radius=.3cm,
        y radius =.3cm
    ] ;
    \draw[-<-=.1,thick] ({5+.3*cos(30)},{.57+.3*sin(30)-4}) arc
    [
        start angle=30,
        end angle=270,
        x radius=.3cm,
        y radius =.3cm
    ] ;
    \end{tikzpicture}
    \caption{Order $3$ isometry.}
    \label{fig:order3}
\end{figure}
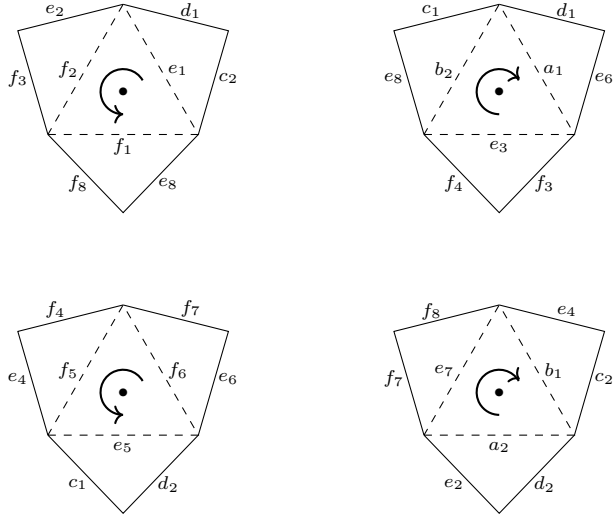

\subsection{Hyperelliptic involution in the Bolza surface} A \emph{hyperelliptic involution} of a Riemann surface $\Sigma$ of genus $g$ is a conformal automorphism $J:\Sigma\to \Sigma$ of order two with $2g+2$ fixed points. The fixed points are the \emph{Weierstrass points}. A surface $\Sigma$ admitting a hyperelliptic involution is called a \emph{hyperelliptic surface}. Note that, every hyperbolic surface of genus two is hyperelliptic. For more details, we refer the reader to Section 3.7 of \cite{FarkasKra}.

The rotation of the octagon (see Figure \ref{fig:involn}) about the center by an angle $\pi$ induces the hyperelliptic involution of the Bolza surface. We recall the result below due to Haas-Susskind \cite{Haas} which is essential for the subsequent sections.

\begin{figure}[htbp]
    \centering
    \begin{tikzpicture}
           \draw [bend left] ({2*cos(22.7)},{2*sin(22.7)}) to ({2*cos(67.5)},{2*sin(67.5)});
          \draw [bend left] ({2*cos(67.5)},{2*sin(67.5)}) to ({2*cos(112.5)},{2*sin(112.5)});
          \draw [bend left] ({2*cos(112.5)},{2*sin(112.5)}) to ({2*cos(157.5)},{2*sin(157.5)});
          \draw [bend left] ({2*cos(157.5)},{2*sin(157.5)}) to ({2*cos(202.5)},{2*sin(202.5)});
          \draw [bend left] ({2*cos(202.5)},{2*sin(202.5)}) to ({2*cos(247.5)}, {2*sin(247.5)});
          \draw [bend left] ({2*cos(247.5)}, {2*sin(247.5)}) to ({2*cos(292.5)},{2*sin(292.5)});
          \draw [bend left] ({2*cos(292.5)},{2*sin(292.5)}) to ({2*cos(337.5)},{2*sin(337.5)});
          \draw [bend left] ({2*cos(337.5)},{2*sin(337.5)}) to ({2*cos(22.7)},{2*sin(22.7)});

          \draw[green] ({2*cos(22.7)},{2*sin(22.7)}) node {\tiny$\bullet$} ({2*cos(67.5)},{2*sin(67.5)}) node {\tiny$\bullet$} ({2*cos(112.5)},{2*sin(112.5)}) node {\tiny$\bullet$} ({2*cos(157.5)},{2*sin(157.5)}) node {\tiny$\bullet$} ({2*cos(202.5)},{2*sin(202.5)}) node {\tiny$\bullet$} ({2*cos(247.5)}, {2*sin(247.5)}) node {\tiny$\bullet$} ({2*cos(292.5)},{2*sin(292.5)}) node {\tiny$\bullet$} ({2*cos(337.5)},{2*sin(337.5)}) node {\tiny$\bullet$};
          
          \draw ({1.93*cos(0)},{1.93*sin(0)}) node {$c$} ({1.9*cos(45)},{1.9*sin(45)}) node {$d$} ({1.9*cos(90)},{1.9*sin(90)}) node {$a$} ({1.9*cos(135)},{1.9*sin(135)}) node {$b$} ({1.9*cos(180)},{1.9*sin(180)}) node {$c$} ({1.9*cos(225)},{1.9*sin(225)}) node {$d$} ({1.9*cos(270)},{1.9*sin(270)}) node {$a$} ({1.93*cos(315)},{1.93*sin(315)}) node {$b$};

          \draw[pink] ({1.62*cos(0)},{1.62*sin(0)}) node {\tiny$\bullet$} ({1.63*cos(180)},{1.63*sin(180)}) node {\tiny$\bullet$};
          \draw[blue] ({1.62*cos(45)},{1.62*sin(45)}) node {\tiny$\bullet$} ({1.64*cos(225)},{1.64*sin(225)}) node {\tiny$\bullet$};
          \draw[purple,thick] ({1.63*cos(90)},{1.63*sin(90)}) node {\tiny$\bullet$} ({1.63*cos(270)},{1.63*sin(270)}) node {\tiny$\bullet$};
          \draw[cyan] ({1.63*cos(135)},{1.63*sin(135)}) node {\tiny$\bullet$} ({1.63*cos(315)},{1.63*sin(315)}) node {\tiny$\bullet$}; 

           \draw[->-=1,thick] ({.7*cos(30)},{.7*sin(30)}) arc
    [
        start angle=30,
        end angle=270,
        x radius=.7cm,
        y radius =.7cm
    ] ;
    \draw (0,0) node {\tiny$\bullet$};
    \draw (-.8,.5) node {$\pi$};

    \end{tikzpicture}
    \caption{Hyperelliptic involution of the Bolza Surface. $\bullet$ represents the fixed points of the involution.}
    \label{fig:involn}
\end{figure}
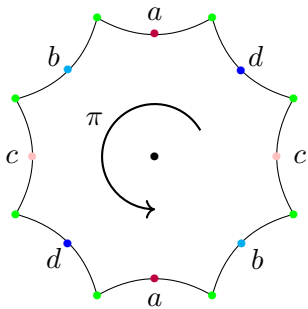

\begin{theorem}[Theorem 1~\cite{Haas}]\label{thm2.4}
     Let $J$ be the hyperelliptic involution of a genus two Riemann surface $M$. Then every simple closed geodesic on $M$ is mapped onto itself by $J$. Furthermore, suppose $\alpha$ is a simple closed geodesic on $M$. If $\alpha$ is a separating curve then $J$ preserves the orientation of $\alpha$. Otherwise, $J$ reverses the orientation of $\alpha$.
\end{theorem}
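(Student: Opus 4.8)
The plan is to reduce all three assertions to two classical inputs about the hyperelliptic involution $J$ of a genus two surface $M$. The first is that $J$ acts as $-\mathrm{id}$ on $H_1(M;\Z)$: averaging over $\langle J\rangle$ identifies the $(+1)$-eigenspace $H_1(M;\mathbb{Q})^{J}$ with $H_1(M/J;\mathbb{Q})=H_1(S^2;\mathbb{Q})=0$, so all of $H_1$ lies in the $(-1)$-eigenspace of $J_*$, and $H_1(M;\Z)$ is torsion-free. The second is that $J$ fixes the isotopy class of every essential simple closed curve on $M$; I would obtain this from the fact that $J$ generates the center of the mapping class group of $M$ (Birman--Hilden), via $T_{J(c)}=JT_cJ^{-1}=T_c$ together with the standard injectivity statement $T_{c_1}=T_{c_2}\Rightarrow c_1\simeq c_2$, or alternatively by the hands-on route of isotoping an arbitrary simple closed curve to a $J$-invariant one using the branched double cover $M\to M/J$. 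Throughout I will use that $J$, being a holomorphic automorphism of a hyperbolic surface, is an orientation-preserving isometry.

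Granting these, the first assertion is immediate: if $\alpha$ is a simple closed geodesic then $J(\alpha)$ is again a closed geodesic (as $J$ is an isometry), freely homotopic to $\alpha$ by the second input, hence equal to $\alpha$ by uniqueness of the geodesic representative in a free homotopy class. For the orientation statement when $\alpha$ is non-separating I would argue by contradiction: here $[\alpha]\neq 0$ in $H_1(M;\Z)$, and if $J$ preserved the orientation of the circle $\alpha=J(\alpha)$ then $J_*[\alpha]=[\alpha]$, which together with $J_*=-\mathrm{id}$ gives $2[\alpha]=0$, contradicting torsion-freeness; so $J$ must reverse the orientation of $\alpha$.

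For the separating case, write $\alpha=\partial M_1=\partial M_2$ with $M_1,M_2$ the two one-holed genus one subsurfaces cut off by $\alpha$, so that $H_1(M;\Z)=H_1(M_1;\Z)\oplus H_1(M_2;\Z)$. Since $J(\alpha)=\alpha$, $J$ either preserves or interchanges $M_1$ and $M_2$; the interchange is impossible, because then $J_*$ would carry the summand $H_1(M_1)$ onto $H_1(M_2)$, incompatible with $J_*=-\mathrm{id}$ (alternatively, all six fixed points of $J$ would have to lie on $\alpha\cong S^1$, impossible for a nontrivial involution of a circle). Hence $J(M_1)=M_1$, and an orientation-preserving self-homeomorphism of the oriented surface-with-boundary $M_1$ restricts to an orientation-preserving self-homeomorphism of $\partial M_1=\alpha$; thus $J$ preserves the orientation of $\alpha$, which finishes the proof.

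The real content lies in the second input — that $J$ fixes every isotopy class of simple closed curves — which ultimately rests on the Birman--Hilden theory of the genus two mapping class group (or on a careful cut-and-paste argument on the branched cover); the remaining steps are linear algebra on $H_1$ and elementary surface topology. I would also be careful that the Dehn-twist conjugation formula is invoked for the orientation-preserving map $J$, and that the boundary-orientation argument in the separating case is run with a fixed orientation convention, so that the ``interchange'' alternative is genuinely excluded rather than merely re-described.
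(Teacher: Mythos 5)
Your argument is correct, but note that the paper itself contains no proof of this statement: it is quoted verbatim as Theorem~1 of Haas--Susskind \cite{Haas} and used as a black box in Section~4, so the only meaningful comparison is with the original source. Your proof is organized around two classical inputs --- that $J_*=-\mathrm{id}$ on $H_1(M;\Z)$ (via the rational transfer isomorphism $H_1(M;\mathbb{Q})^{J}\cong H_1(M/J;\mathbb{Q})=H_1(S^2;\mathbb{Q})=0$ plus torsion-freeness of $H_1$), and that the hyperelliptic mapping class is central in $\mathrm{Mod}(S_2)$, so $T_{J(c)}=JT_cJ^{-1}=T_c$ forces $J(c)\simeq c$ and uniqueness of geodesic representatives gives $J(\alpha)=\alpha$ --- and both deductions are sound. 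The orientation dichotomy then follows cleanly: for $\alpha$ non-separating, $J_*[\alpha]=-[\alpha]$ with $[\alpha]\neq 0$ rules out orientation preservation; for $\alpha$ separating, the splitting $H_1(M)\cong H_1(M_1)\oplus H_1(M_2)$ into the two one-holed tori shows $J$ cannot interchange the sides (since $-\mathrm{id}$ preserves each summand), and an orientation-preserving self-map of $M_1$ fixing $\partial M_1$ setwise preserves the induced boundary orientation. This is essentially the same skeleton as Haas--Susskind's original argument (which likewise rests on the centrality of the hyperelliptic class, obtained from Birman--Hilden, and then settles the orientation question by locating the Weierstrass points on $\alpha$: two of them on a non-separating geodesic, none on a separating one); your homological eigenvalue argument is a tidy substitute for their fixed-point count and arguably more self-contained. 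Two small points of care: your parenthetical alternative in the separating case (``all six fixed points would lie on $\alpha$'') needs the extra remark that $J$ cannot fix $\alpha$ pointwise because a nontrivial conformal involution has isolated fixed points, and, as you acknowledge, the centrality of $J$ in $\mathrm{Mod}(S_2)$ is the one input that genuinely requires an external citation rather than the elementary topology surrounding it.
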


  \section{Complexity}
  
  In this section, we show that the odd complexity of the set of systolic geodesics $\Sys(S)$ is zero. Also, we show that the even complexity is finite and it is non-zero for all integers of the form $(10+6n)$ and $(12+6n)$, $n\in\Z_{\geq0}$. We begin by proving Theorem \ref{thm:3.1}.
  
\mainthmone*

  
  In Section 2.1, we have seen that the Bolza surface is obtained by identifying the opposite sides of a regular hyperbolic octagon with each interior angle $\frac{\pi}{4}$. There is an alternative construction of the Bolza surface (Section 3 of ~\cite{MR3877282}), by taking two identical copies of the right angled regular hyperbolic octagon and identifying the sides in pairs as shown in Figure \ref{fig1.1}. The sides $\{a_1,a_2\},\{b_1,b_2\}$, $\{ c_1 ,c_2 \}$ and $\{d_1,d_2\}$ project onto simple closed geodesics on $S$. We denote these geodesics by $a, b,c,d$ respectively and define $\Omega_1=\{a, b,c,d\}$. Also, the diagonals joining opposite vertices of each octagon project onto simple closed geodesics. We define $\Omega_2$ as the collection of all such geodesics. In~\cite{MR3877282}, Sanki has proved that $\Sys(S)=\Omega_1\cup \Omega_2$ is the set of all systolic geodesics in $S$. 

  Let $p:\mathbb{D} \to S$ be the universal covering map, where $\mathbb{D}$ is the Poincar\'e disc model of the hyperbolic plane. Then $p^{-1}(\Omega_1)$ is a tessellation $\mathbb{T}$ of $\mathbb{D}$ by regular right angled octagons. Each tile of $\mathbb{T}$ maps under $p$ to one of the octagons $P_1$ or $P_2$, as in Figure~\ref{fig1.1}. A tile of $\mathbb{T}$ is called of Type I (or Type II) if it maps to $P_1$ (or $P_2$ respectively).

  We regard $\Omega_1$ as a graph, where the vertices are the intersection points of the systolic geodesics in $\Omega_1$ and the geodesic segments between the vertices are the edges.
  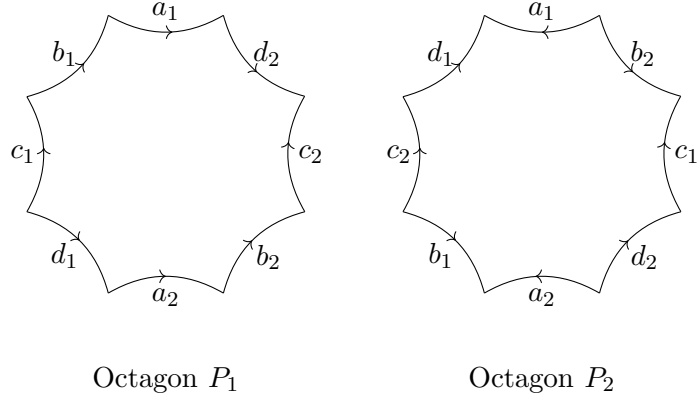
\begin{figure}[htbp]
    \centering
    \begin{tikzpicture}
         \draw [bend left,-<-=.5] ({2*cos(22.7)},{2*sin(22.7)}) to ({2*cos(67.5)},{2*sin(67.5)});
         \draw [bend left,-<-=.5] ({2*cos(67.5)},{2*sin(67.5)}) to ({2*cos(112.5)},{2*sin(112.5)});
          \draw [bend left,-<-=.5] ({2*cos(112.5)},{2*sin(112.5)}) to ({2*cos(157.5)},{2*sin(157.5)});
          \draw [bend left,-<-=.5] ({2*cos(157.5)},{2*sin(157.5)}) to ({2*cos(202.5)},{2*sin(202.5)});
          \draw [bend left,->-=.5] ({2*cos(202.5)},{2*sin(202.5)}) to ({2*cos(247.5)}, {2*sin(247.5)});
          \draw [bend left,->-=.5] ({2*cos(247.5)}, {2*sin(247.5)}) to ({2*cos(292.5)},{2*sin(292.5)});
          \draw [bend left,->-=.5] ({2*cos(292.5)},{2*sin(292.5)}) to ({2*cos(337.5)},{2*sin(337.5)});
          \draw [bend left,->-=.6] ({2*cos(337.5)},{2*sin(337.5)}) to ({2*cos(22.7)},{2*sin(22.7)});

         \draw [bend left,-<-=.5] ({5+2*cos(22.7)},{2*sin(22.7)}) to ({5+2*cos(67.5)},{2*sin(67.5)});
         \draw [bend left,->-=.5] ({5+2*cos(67.5)},{2*sin(67.5)}) to ({5+2*cos(112.5)},{2*sin(112.5)});
         \draw [bend left,-<-=.5] ({5+2*cos(112.5)},{2*sin(112.5)}) to ({5+2*cos(157.5)},{2*sin(157.5)});
          \draw [bend left,-<-=.5] ({5+2*cos(157.5)},{2*sin(157.5)}) to ({5+2*cos(202.5)},{2*sin(202.5)});
          \draw [bend left,->-=.5] ({5+2*cos(202.5)},{2*sin(202.5)}) to ({5+2*cos(247.5)}, {2*sin(247.5)});
          \draw [bend left,-<-=.5] ({5+2*cos(247.5)}, {2*sin(247.5)}) to ({5+2*cos(292.5)},{2*sin(292.5)});
          \draw [bend left,->-=.5] ({5+2*cos(292.5)},{2*sin(292.5)}) to ({5+2*cos(337.5)},{2*sin(337.5)});
         \draw [bend left,->-=.6] ({5+2*cos(337.5)},{2*sin(337.5)}) to ({5+2*cos(22.7)},{2*sin(22.7)});

         \draw ({1.93*cos(0)},{1.93*sin(0)}) node {$c_2$} ({1.9*cos(45)},{1.9*sin(45)}) node {$d_2$} ({1.9*cos(90)},{1.9*sin(90)}) node {$a_1$} ({1.9*cos(135)},{1.9*sin(135)}) node {$b_1$} ({1.9*cos(180)},{1.9*sin(180)}) node {$c_1$} ({1.9*cos(225)},{1.9*sin(225)}) node {$d_1$} ({1.9*cos(270)},{1.9*sin(270)}) node {$a_2$} ({1.93*cos(315)},{1.93*sin(315)}) node {$b_2$};

         \draw ({5+1.93*cos(0)},{1.93*sin(0)}) node {$c_1$} ({5+1.9*cos(45)},{1.9*sin(45)}) node {$b_2$} ({5+1.9*cos(90)},{1.9*sin(90)}) node {$a_1$} ({5+1.9*cos(135)},{1.9*sin(135)}) node {$d_1$} ({5+1.9*cos(180)},{1.9*sin(180)}) node {$c_2$} ({5+1.9*cos(225)},{1.9*sin(225)}) node {$b_1$} ({5+1.9*cos(270)},{1.9*sin(270)}) node {$a_2$} ({5+1.93*cos(315)},{1.93*sin(315)}) node {$d_2$};
         \draw (0,-3) node {Octagon $P_1$} (5,-3) node{Octagon $P_2$};

    \end{tikzpicture}
    \caption{Alternative description of the Bolza Surface.}
    \label{fig1.1}
    \end{figure}

To simplify the proof of Theorem \ref{thm:3.1}, we first prove the following four lemmas (Lemmas \ref{lemma:16geodesics}--\ref{lemma 4.4}). Their combination yields the desired result.

    \begin{lemma}\label{lemma:16geodesics}
        The set $\Gamma$ of all non-systolic geodesics in $S$, intersecting the graph $\Omega_1$ only at the vertices, has $16$ elements.
    \end{lemma}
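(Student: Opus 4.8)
The plan is to classify all non-systolic simple closed geodesics that meet the systolic graph only at its vertices, by working in the universal cover. Since the tessellation $\mathbb{T} = p^{-1}(\Omega_1)$ is by regular right-angled octagons, its edges and the lifts of the diagonals in $\Omega_2$ together cut $\mathbb{D}$ into triangles (the $(4,4,4)$-triangles appearing in Figure~\ref{fig:triang}), and the vertices of the systolic graph are exactly the images under $p$ of the vertices of $\mathbb{T}$ and the centers of the octagons. A geodesic $\gamma$ meeting the systolic graph only at vertices must, when lifted, be a bi-infinite geodesic in $\mathbb{D}$ passing only through vertices of this triangulation; in particular each lift enters and exits every octagon tile it crosses through a pair of these special points (a vertex or a center). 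So the first step is to enumerate, for a single right-angled octagon tile with its diagonals drawn in, the finitely many geodesic chords whose endpoints are among the $8$ vertices and the center — discarding those that run along a systolic edge — and record which boundary points they connect.

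Next I would set up the combinatorics of how such chords can be concatenated across adjacent tiles. Crossing a shared edge of $\mathbb{T}$, a lift of $\gamma$ must continue as a geodesic, so the exit point on one tile equals the entry point on the next and the two chords must be collinear at that vertex; this gives a transition rule on the finite set of "vertex-to-vertex chord types," i.e. a finite directed graph whose bi-infinite paths (up to the deck group action) correspond to the geodesics in question. Because the Bolza surface has the large isometry group generated by $R$ and $L$ (order $8$ and order $3$), and the hyperelliptic involution $J$ acts on every simple closed geodesic (Theorem~\ref{thm2.4}), the enumeration reduces drastically: one only needs a few orbit representatives, and $J$ together with $R$ propagates each to its full orbit. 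I would then check closure (the bi-infinite path is periodic, giving a genuine closed geodesic on $S$), simplicity (no self-intersections, which can fail and must be excluded), and non-systolicity (length strictly greater than $\sys(S)$, or equivalently that the curve is not already in $\Omega_1 \cup \Omega_2$). Counting the surviving orbits should yield exactly $16$.

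The main obstacle I anticipate is the bookkeeping in the middle step: verifying that a candidate periodic chord-path actually closes up to an embedded geodesic on $S$ rather than a non-simple curve or a curve that is freely homotopic to a systolic one, and making sure the enumeration of transition types is genuinely exhaustive rather than merely plausible. Controlling simplicity is delicate because two lifts related by the deck group can cross even when a single fundamental-domain picture looks innocuous, so I would lean on the hyperelliptic symmetry and on an intersection-number count (each such $\gamma$ meets $\Sys(S)$ in a fixed even number of points, forced to be at least $10$ by Theorem~\ref{thm:3.1}) to rule out the bad cases. A secondary concern is bounding the period: a priori a bi-infinite vertex path could be aperiodic or have arbitrarily long period, so I would argue that only finitely many chord types exist and that the return condition to a fundamental domain caps the combinatorial length, leaving a finite, explicitly checkable list from which the number $16$ emerges.
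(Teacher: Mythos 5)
Your overall strategy (lift to the universal cover, enumerate vertex-to-vertex chords of a tile, glue them along a transition rule, and propagate by the isometry group) is workable in outline, but it has a genuine gap at exactly the point you flag as a ``secondary concern'': nothing in your argument makes the enumeration finite. A bi-infinite path in your transition graph that is periodic under a deck transformation can a priori have arbitrarily long period, and the claim that ``the return condition to a fundamental domain caps the combinatorial length'' is not true as stated --- a closed geodesic can re-enter a fundamental domain arbitrarily many times before closing up, so your finite directed graph has infinitely many closed paths and the ``finite, explicitly checkable list'' never materializes. The missing idea, which is the heart of the paper's proof, is to use simplicity \emph{a priori} to bound the number of chords before any enumeration: a lift of $\delta\in\Gamma$ can pass from one tile to the next only through a shared vertex, hence stays in tiles of a single type (the two tiles opposite each other across a vertex of the tessellation have the same type, while adjacent tiles alternate), and since the tessellation vertices project to only four points of $S$, each of which a simple closed curve can visit at most once, the lift traverses a bounded number of octagons --- the paper argues at most two. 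Only after this reduction does the problem become a finite check inside one or two tiles, from which the isometry group produces the count of $16$. You treat simplicity as an a posteriori filter on candidates rather than as the source of finiteness, and that inversion is what breaks the argument.

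Two smaller points. First, invoking Theorem~\ref{thm:3.1} to force $i(\gamma,\Sys(S))\geq 10$ is circular here: the proof of that theorem handles the case $\gamma\in\Gamma$ separately and relies on the present lemma (via Lemma~\ref{Thm4.2}), so it cannot be used to control your candidates. Second, if you keep the octagon centers among your ``special points,'' note that they can be discarded immediately: a geodesic chord through a center whose boundary endpoints are vertices lies along a diagonal, hence along a systolic geodesic, so no curve in $\Gamma$ passes through a center.
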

    \begin{proof}
        Suppose $\delta\in \Gamma$ and  $\Tilde{\delta}$ denotes its path lift in $\mathbb D$ with the initial point at some vertex. Then $\Tilde{\delta}$ is contained in only the same type of tiles, otherwise $\Tilde{\delta}$ will have a non-transversal intersection with a lift of a systolic geodesic. Furthermore, $\Tilde{\delta}$ passes through at most two octagons, as $\delta$ is simple. The arcs $\{ \gamma_1,\gamma_2\}$ and $\{\beta_1,\beta_2\}$ in Figure \ref{fig:nonsystole} project to two geodesics satisfying the properties of the lemma. Using the isometries of the Bolza surface and interchanging the octagons, we get the remaining $14$ geodesics.
        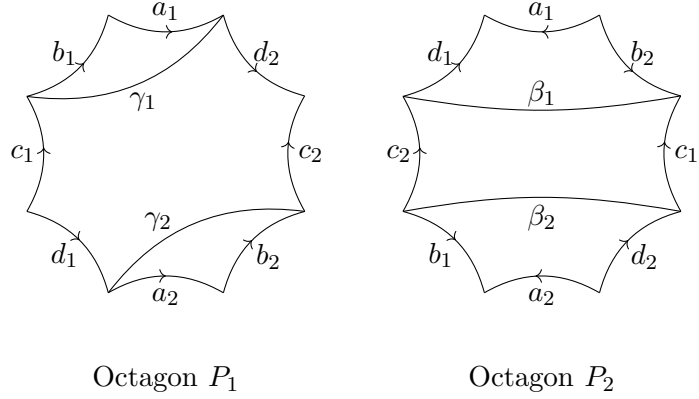
\begin{figure}
            \centering
            \begin{tikzpicture}
                \draw [bend left,-<-=.5] ({2*cos(22.7)},{2*sin(22.7)}) to ({2*cos(67.5)},{2*sin(67.5)});
                \draw [bend left,-<-=.5] ({2*cos(67.5)},{2*sin(67.5)}) to ({2*cos(112.5)},{2*sin(112.5)});
                \draw [bend left,-<-=.5] ({2*cos(112.5)},{2*sin(112.5)}) to ({2*cos(157.5)},{2*sin(157.5)});
                \draw [bend left,-<-=.5] ({2*cos(157.5)},{2*sin(157.5)}) to ({2*cos(202.5)},{2*sin(202.5)});
                \draw [bend left,->-=.5] ({2*cos(202.5)},{2*sin(202.5)}) to ({2*cos(247.5)}, {2*sin(247.5)});
                \draw [bend left,->-=.5] ({2*cos(247.5)}, {2*sin(247.5)}) to ({2*cos(292.5)},{2*sin(292.5)});
                \draw [bend left,->-=.5] ({2*cos(292.5)},{2*sin(292.5)}) to ({2*cos(337.5)},{2*sin(337.5)});
                \draw [bend left,->-=.6] ({2*cos(337.5)},{2*sin(337.5)}) to ({2*cos(22.7)},{2*sin(22.7)});

                \draw [bend left,-<-=.5] ({5+2*cos(22.7)},{2*sin(22.7)}) to ({5+2*cos(67.5)},{2*sin(67.5)});
                \draw [bend left,->-=.5] ({5+2*cos(67.5)},{2*sin(67.5)}) to ({5+2*cos(112.5)},{2*sin(112.5)});
                \draw [bend left,-<-=.5] ({5+2*cos(112.5)},{2*sin(112.5)}) to ({5+2*cos(157.5)},{2*sin(157.5)});
                \draw [bend left,-<-=.5] ({5+2*cos(157.5)},{2*sin(157.5)}) to ({5+2*cos(202.5)},{2*sin(202.5)});
                \draw [bend left,->-=.5] ({5+2*cos(202.5)},{2*sin(202.5)}) to ({5+2*cos(247.5)}, {2*sin(247.5)});
                \draw [bend left,-<-=.5] ({5+2*cos(247.5)}, {2*sin(247.5)}) to ({5+2*cos(292.5)},{2*sin(292.5)});
                \draw [bend left,->-=.5] ({5+2*cos(292.5)},{2*sin(292.5)}) to ({5+2*cos(337.5)},{2*sin(337.5)});
                \draw [bend left,->-=.6] ({5+2*cos(337.5)},{2*sin(337.5)}) to ({5+2*cos(22.7)},{2*sin(22.7)});

                \draw ({1.93*cos(0)},{1.93*sin(0)}) node {$c_2$} ({1.9*cos(45)},{1.9*sin(45)}) node {$d_2$} ({1.9*cos(90)},{1.9*sin(90)}) node {$a_1$} ({1.9*cos(135)},{1.9*sin(135)}) node {$b_1$} ({1.9*cos(180)},{1.9*sin(180)}) node {$c_1$} ({1.9*cos(225)},{1.9*sin(225)}) node {$d_1$} ({1.9*cos(270)},{1.9*sin(270)}) node {$a_2$} ({1.93*cos(315)},{1.93*sin(315)}) node {$b_2$};

                \draw ({5+1.93*cos(0)},{1.93*sin(0)}) node {$c_1$} ({5+1.9*cos(45)},{1.9*sin(45)}) node {$b_2$} ({5+1.9*cos(90)},{1.9*sin(90)}) node {$a_1$} ({5+1.9*cos(135)},{1.9*sin(135)}) node {$d_1$} ({5+1.9*cos(180)},{1.9*sin(180)}) node {$c_2$} ({5+1.9*cos(225)},{1.9*sin(225)}) node {$b_1$} ({5+1.9*cos(270)},{1.9*sin(270)}) node {$a_2$} ({5+1.93*cos(315)},{1.93*sin(315)}) node {$d_2$};

                \draw[bend left] ({2*cos(67.5)},{2*sin(67.5)}) to  ({2*cos(157.5)},{2*sin(157.5)});
                \draw[bend left] ({2*cos(247.5)}, {2*sin(247.5)}) to ({2*cos(337.5)},{2*sin(337.5)});
                \draw (-.3,.7) node{$\gamma_1$};
                \draw (-.1,-.9) node{$\gamma_2$};

                \draw[bend left=10] ({5+2*cos(22.7)},{2*sin(22.7)}) to ({5+2*cos(157.5)},{2*sin(157.5)});
                \draw[bend left=10] ({5+2*cos(202.5)},{2*sin(202.5)}) to ({5+2*cos(337.5)},{2*sin(337.5)});
                \draw (5,.8) node{$\beta_1$} (5,-.85) node{$\beta_2$};
                \draw (0,-3) node {Octagon $P_1$} (5,-3) node{Octagon $P_2$};
            \end{tikzpicture}
            \caption{Non-systolic geodesics on Bolza surface.}
            \label{fig:nonsystole}
        \end{figure}
    \end{proof}
    
   Before proving the next lemma, we introduce the notion of the intersection number of a system of curves with a simple closed curve, which will play a key role throughout the rest of the article. 

\begin{definition}
Let $\Omega$ be a system of curves on a surface, and let $\beta$ be a simple closed curve. We define the intersection number of $\Omega$ with $\beta$ by
$$i(\Omega,\beta)\coloneqq \sum_{\alpha\in\Omega} i(\alpha,\beta).$$
\end{definition}

    As $i(\Omega_1,\delta)\neq0$, there exists a path lift of $\delta$ in the universal cover $\mathbb{D}$, with the initial point on the boundary of a tile in the tessellation described above. Furthermore, we can choose the initial point in the interior of some side in $p^{-1}(\Omega_1)$. Let $\Tilde{\delta}$ be such a path lift of $\delta$. 

    \begin{lemma}\label{lem3.1}
    $i(\Omega_1,\delta)=|p^{-1}(\Omega_1)\cap \Tilde{\delta}|-1$.
    \end{lemma}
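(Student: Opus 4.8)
The plan is to view $\Tilde{\delta}$ as a compact \emph{geodesic} arc whose two endpoints already lie on $p^{-1}(\Omega_1)=\mathbb{T}$, and to count the octagonal tiles of $\mathbb{T}$ that $\Tilde{\delta}$ crosses. First I would record the geometry of the lift. Since $\delta$ is a closed geodesic, $\Tilde{\delta}\colon[0,L]\to\mathbb{D}$ is a geodesic segment lying on the axis of the covering transformation $g$ corresponding to $\delta$, with endpoints $x_0=\Tilde{\delta}(0)$, chosen in the interior of an edge $e$ of $\mathbb{T}$, and $g\cdot x_0$; as $g$ permutes the cells of $\mathbb{T}$, the point $g\cdot x_0$ lies in the interior of the edge $g\cdot e$, and $p\circ\Tilde{\delta}$ traverses $\delta$ once as a loop based at $q_0\coloneqq p(x_0)=p(g\cdot x_0)\in\Omega_1$. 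Next I would check that $\Tilde{\delta}$ meets $\mathbb{T}$ transversally in finitely many points: it cannot contain a sub-arc of an edge of $\mathbb{T}$, since then the complete geodesic carrying $\Tilde{\delta}$ would equal the one carrying that edge and $\delta$ would lie in $\Omega_1\subseteq\Sys(S)$, contradicting that $\delta$ is non-systolic; and a compact segment meets only finitely many edges of the locally finite tessellation $\mathbb{T}$, each in at most one point (two distinct complete geodesics of $\mathbb{D}$ meet at most once). List $\Tilde{\delta}\cap\mathbb{T}=\{x_0=y_0,y_1,\dots,y_{N-1}=g\cdot x_0\}$ in the order they occur along $\Tilde{\delta}$, so $N=|p^{-1}(\Omega_1)\cap\Tilde{\delta}|$.

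The main step is a tile count. For each $0\le i\le N-2$ the open sub-arc of $\Tilde{\delta}$ between $y_i$ and $y_{i+1}$ is disjoint from $\mathbb{T}$, hence lies in the interior of a single octagonal tile; thus $\Tilde{\delta}$ crosses exactly $N-1$ tiles. Projecting by $p$, every point of $\delta\cap\Omega_1$ is of the form $p(y_i)$, since $\Tilde{\delta}(t)\in p^{-1}(\Omega_1)$ exactly when $p(\Tilde{\delta}(t))\in\Omega_1$; and as $\delta$ is simple, $p\circ\Tilde{\delta}$ is injective on $[0,L)$, so $p(y_0),\dots,p(y_{N-2})$ are pairwise distinct while $p(y_{N-1})=p(y_0)=q_0$. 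Hence $\delta$ meets the set $\Omega_1$ in exactly $N-1$ points.

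It remains to identify this number with $i(\Omega_1,\delta)$. Each $\alpha\in\Omega_1$ is a simple closed geodesic distinct from $\delta$, so $\alpha$ and $\delta$ are in minimal position (a hyperbolic surface carries no geodesic bigons), whence $i(\alpha,\delta)=|\alpha\cap\delta|$; summing, $i(\Omega_1,\delta)=\sum_{\alpha\in\Omega_1}|\alpha\cap\delta|$, and this equals $|\delta\cap\Omega_1|=N-1$ once one knows that the four sets $\delta\cap a,\delta\cap b,\delta\cap c,\delta\cap d$ are pairwise disjoint. Putting everything together yields $i(\Omega_1,\delta)=N-1=|p^{-1}(\Omega_1)\cap\Tilde{\delta}|-1$.

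The place I expect the real obstacle to be is exactly this last disjointness: the components of $\Omega_1$ can a priori meet one another (or themselves) at the finitely many vertices of the graph underlying $\mathbb{T}$, and one must make sure a geodesic $\delta$ passing through such a vertex — where, being transverse to $\mathbb{T}$, it passes from one tile directly into the opposite one — is accounted for identically on both sides of the asserted equality. I would settle this from Sanki's explicit description of $\Omega_1$ in \cite{MR3877282} (it is immediate if $a,b,c,d$ are pairwise disjoint, so that each vertex of $\mathbb{T}$ is a self-crossing of a single component), or, alternatively, by proving directly that a simple closed geodesic other than the systolic ones cannot pass through a vertex of $\mathbb{T}$, e.g.\ using the hyperelliptic involution and Theorem~\ref{thm2.4}. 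Everything else is routine covering-space and minimal-position reasoning.
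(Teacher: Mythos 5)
Your core argument is the same as the paper's: distinct closed geodesics are automatically in minimal position, $p$ maps $\Tilde{\delta}\cap p^{-1}(\Omega_1)$ onto $\delta\cap\Omega_1$, and simplicity of $\delta$ forces this map to be injective except for the identification of the two endpoints of the lift, which produces the $-1$. The tile-counting digression is correct but not needed for the identity. Up to the point where you stop, you have reproduced the paper's (very terse) proof.

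The step you defer, however, is a genuine issue, and neither of your two proposed ways to close it is available. First, $a,b,c,d$ are \emph{not} pairwise disjoint: in each right-angled octagon of Figure \ref{fig1.1} consecutive sides carry different labels (the cyclic order in $P_1$ is $c_2,d_2,a_1,b_1,c_1,d_1,a_2,b_2$), so every vertex of $\mathbb{T}$ projects to a transverse crossing of two \emph{distinct} members of $\Omega_1$; it cannot be a self-crossing of a single member, since these are simple. Second, it is false that a simple closed geodesic other than the systolic ones cannot pass through a vertex of $\mathbb{T}$: the sixteen geodesics of Lemma \ref{lemma:16geodesics} meet the systolic graph only at vertices, and more generally the vertices of the systolic graph are Weierstrass points, so by Theorem \ref{thm2.4} every non-separating simple closed geodesic passes through two of them; applying isometries of $S$ (which act transitively on the Weierstrass points and preserve both $\Sys(S)$ and $\Gamma$) one obtains non-systolic geodesics outside $\Gamma$ passing through projected vertices of $\mathbb{T}$. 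For such a $\delta$ a vertex passage contributes $2$ to $\sum_{\alpha\in\Omega_1}i(\alpha,\delta)$ but only one point to $p^{-1}(\Omega_1)\cap\Tilde{\delta}$, so no disjointness or avoidance argument can rescue the identity as literally stated. The consistent resolution, and the one under which the lemma is actually used, is to count a vertex passage with multiplicity two, equivalently to first push $\Tilde{\delta}$ off the vertices as in Figure \ref{fig1.2}; this is precisely what is done in Case 2 of Lemma \ref{Thm4.2} and in the proof of Theorem \ref{thm:3.1}, so the lemma is only ever applied to lifts meeting $p^{-1}(\Omega_1)$ in interiors of edges. If you add that convention (or that hypothesis), your argument closes and coincides with the paper's; as written, your plan for the flagged step would fail.
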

    
    \begin{proof}
    The curves considered here are simple closed geodesics and hence they are in a pairwise minimal position. That the initial and the terminal points of $\Tilde{\delta}$ are identified implies  $i(\Omega_1,\delta)\leq|\Gamma\cap \Tilde{\delta}|-1$.  If some interior points of $\Tilde{\delta}$ are identified, then it contradicts that $\delta$ is simple. This completes the proof of the lemma.
    \end{proof}

    \begin{lemma}\label{Thm4.2}
    If $\gamma$ is a non-systolic geodesic in the Bolza surface $S$, then $i(\Omega_1,\gamma)$ is an even integer.
    \end{lemma}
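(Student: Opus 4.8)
The plan is to read off $i(\Omega_1,\gamma)$ from a geodesic lift of $\gamma$ crossing the tessellation $\mathbb{T}=p^{-1}(\Omega_1)$, and then to exploit the natural two-colouring of the tiles of $\mathbb{T}$ coming from the two–octagon description of $S$. Choose a geodesic path lift $\tilde\gamma\subset\mathbb{D}$ of $\gamma$ whose initial point $x$ lies in the interior of a side of $\mathbb{T}$, as arranged just before Lemma~\ref{lem3.1}; its terminal point is then $g\cdot x$, where $g\in\pi_1(S)$ is the deck transformation representing $\gamma$, so it too lies in the interior of a side of $\mathbb{T}$. By Lemma~\ref{lem3.1}, $i(\Omega_1,\gamma)$ equals the number $m$ of tiles of $\mathbb{T}$ that $\tilde\gamma$ traverses (assuming for the moment that $\tilde\gamma$ avoids the vertices of $\mathbb{T}$; the general case is discussed at the end). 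It therefore suffices to show that $m$ is even.

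Colour a tile of $\mathbb{T}$ with colour $\mathrm{I}$ or $\mathrm{II}$ according as it is of Type I or Type II, i.e.\ maps under $p$ to $P_1$ or to $P_2$. From the two–octagon construction (Figure~\ref{fig1.1}), $S$ is obtained by gluing each of the eight sides of $P_1$ to a side of $P_2$; hence in $S$ every edge of $\Omega_1$ has $P_1$ on one side and $P_2$ on the other, so in $\mathbb{T}$ any two tiles sharing an edge have opposite colours. Consequently, if $T_1,T_2,\dots,T_m$ are the successive tiles traversed by $\tilde\gamma$, their colours alternate. It then remains to compare the colours of the two ends. The tile following $T_m$ past the terminal point $g\cdot x$ is $g\cdot T_1$, since the geodesic lift continues as $g\cdot\tilde\gamma$; thus $T_m$ and $g\cdot T_1$ share the terminal side of $\mathbb{T}$ and so have opposite colours, while $g$, being a deck transformation, preserves the colouring, so $g\cdot T_1$ has the colour of $T_1$. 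Hence $T_1$ and $T_m$ have opposite colours, which forces $m$ to be even, and $i(\Omega_1,\gamma)=m$ is even.

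I expect the only genuine obstacle to be the bookkeeping in the first step when $\tilde\gamma$ passes through a vertex of $\mathbb{T}$. There four tiles meet (the octagons being right angled), and their colours read $\mathrm{I},\mathrm{II},\mathrm{I},\mathrm{II}$ around the vertex, so a geodesic through the vertex passes from one tile to the diametrically opposite one without changing colour, while it crosses the two $\Omega_1$–strands meeting at that vertex and so contributes $2$ to $i(\Omega_1,\gamma)$. Thus each such passage alters $i(\Omega_1,\gamma)$ by an even amount and leaves the colour pattern intact, and the parity conclusion above is unaffected. If one prefers to avoid this case analysis, the lemma also follows from a homological computation: over $\mathbb{Z}/2\mathbb{Z}$ one has $i(\Omega_1,\gamma)\equiv [\gamma]\cdot\bigl([a]+[b]+[c]+[d]\bigr)\pmod 2$, and $[a]+[b]+[c]+[d]=[\partial P_1]=0$ in $H_1(S;\mathbb{Z}/2\mathbb{Z})$ because $a\cup b\cup c\cup d$ is precisely the image of the boundary of the octagon $P_1$, which bounds $P_1$. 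I would keep the tessellation argument as the main proof, since it reuses Lemma~\ref{lem3.1} and records the sequence of tiles crossed by $\tilde\gamma$, and present the homological argument as a remark.
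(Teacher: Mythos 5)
Your main argument is correct and is essentially the paper's own proof: the paper likewise lifts $\gamma$ to the octagon tessellation, uses the fact that every edge of $p^{-1}(\Omega_1)$ separates a Type I tile from a Type II tile so that the traversed tiles alternate in type, concludes from the matching of the two ends of the lift that the number of traversed octagons is even, and invokes Lemma \ref{lem3.1}; vertex passages are handled there by a small perturbation where you analyse the local picture directly. Your endpoint step (comparing $T_m$ with $g\cdot T_1$ via the deck transformation) is a more precise justification of the paper's one-line claim that a lift starting on a side of a Type I octagon must end on a side of a Type II octagon.

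One caveat: your opening assumption that the initial point of $\tilde\gamma$ can be taken in the interior of a side of $\mathbb{T}$ is arranged in the paper only for $\gamma\notin\Gamma$, and it genuinely fails for some curves of $\Gamma$ (Lemma \ref{lemma:16geodesics}): the projection of $\gamma_1\cup\gamma_2$ in Figure \ref{fig:nonsystole} meets $\Omega_1$ only at octagon vertices, so no lift of it has its endpoints in the interior of a side. The paper disposes of $\Gamma$ as a separate first case; in your scheme you should either do the same, or base the lift at a point $x$ of $\tilde\gamma$ lying off $p^{-1}(\Omega_1)$ and run the same colour-change count (the tiles containing $x$ and $g\cdot x$ have the same colour, the colour flips exactly at interior-edge crossings, and each vertex passage keeps the colour while contributing $2$ to $i(\gamma,\Omega_1)$), which covers all cases uniformly. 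Finally, your homological remark is correct and is a genuinely different, cleaner route than the paper's: in the two-octagon CW structure the boundary of $P_1$ covers each edge of $a\cup b\cup c\cup d$ exactly once, so $[a]+[b]+[c]+[d]=0$ in $H_1(S;\mathbb{Z}/2\mathbb{Z})$, and since geodesics are in minimal position, $i(\gamma,\Omega_1)\equiv 0\pmod 2$ follows at once, with no vertex or base-point bookkeeping.
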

    
    \begin{proof}
      If $\gamma\in \Gamma$ ($\Gamma$ is defined in Lemma \ref{lemma:16geodesics}), then $i(\Omega_1, \gamma)=4$. Now, suppose that $\gamma\notin \Gamma$. As $\Omega_1$ is a filling system, $i(\Omega_1,\gamma)\neq0$. Let $\Tilde{\gamma}$ be a path lift of $\gamma$ with the initial point in the interior of some side of an octagon determined by $p^{-1}(\Omega_1)$. Now, there are the following two cases to be considered.

    \noindent Case 1. If $\Tilde{\gamma}$ does not pass through any of the vertices of the octagons then $\Tilde{\gamma}$ alternatively passes through octagons of Type I and Type II. As $\gamma$ is simple and the sides of Type I octagons are identified with the sides of Type II octagons, if $\Tilde{\gamma}$ starts from a point on a side of a Type I octagon then it ends at a point on a side of a Type II octagon and vice versa. Therefore, $\Tilde{\gamma}$ passes through an even number of octagons, say $2n$. As $\Tilde{\gamma}$ intersects each of them twice, by Lemma \ref{lem3.1}, we have $i(\Omega_1,\gamma)=2n$, which is even.

    \noindent Case 2. If $\Tilde{\gamma}$ passes through some vertices, then we perturb $\Tilde{\gamma}$ near the vertices and get a new arc $\Tilde{\gamma}'$ which does not pass through a vertex (see Figure \ref{fig1.2}). Then $i(\Omega_1,\gamma')= i(\Omega_1,\gamma)$, where $\gamma'$ is the image of $\Tilde{\gamma}'$ in the surface and $\Tilde{\gamma}'$ alternatively passes the octagons of Type I and Type II. A similar argument as in Case 1 implies that $i(\Omega_1, \gamma)$ is even.
\begin{figure}[htbp]
        \centering
        \begin{tikzpicture}
            \draw (-1,1)--(1,-1);
            \draw (1,1)--(-1,-1);
            \draw[red] (-1,0)--(1,0);
            
            \draw (3.5,1)--(5.5,-1);
            \draw (5.5,1)--(3.5,-1);
            \draw[red] (3.5,0)--(4,0);
            \draw[red] (4,0) [bend left=50] to (5,0);
            \draw[red] (5,0)--(5.5,0);
            
            \draw (-1.2,0) node{$\Tilde{\gamma}$};
            \draw (3.2,0) node{$\Tilde{\gamma}'$};

            \draw (2,0) node {$\longrightarrow$};        \end{tikzpicture}
        \caption{Perturbation of $\Tilde{\gamma}$.}
        \label{fig1.2}
    \end{figure}
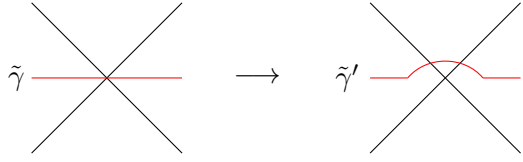
    \end{proof}
    
    \begin{lemma}\label{lemma 4.4}
    Let $\Tilde{\gamma}$ be an arc in $\mathbb D$ without self-intersection such that the end points of $\Tilde{\gamma}$ are in the same orbit and $\Tilde{\gamma}$ contains no vertices of $p^{-1}(\Omega_1)$. Then, $\Tilde{\gamma}$ intersects $p^{-1}(\Omega_2)$ at even number of points.
    \end{lemma}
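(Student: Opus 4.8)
The plan is to recast the statement homologically: I claim the conclusion follows once $\Omega_2$ is null-homologous in $H_1(S;\Z/2)$. Note first that the two endpoints of $\tilde\gamma$ lie in a single orbit of the deck group $G\cong\pi_1(S)$, say $\tilde\gamma(1)=g\cdot\tilde\gamma(0)$ with $g\in G$; and, perturbing $\tilde\gamma$ as in the proof of Lemma~\ref{Thm4.2} if necessary, we may assume $\tilde\gamma$ is transverse to $p^{-1}(\Omega_2)$, meets it only in the interiors of edges, and has its endpoints off $p^{-1}(\Omega_2)$. The hypothesis that $\tilde\gamma$ contains no vertex of $p^{-1}(\Omega_1)$ is exactly what keeps $\tilde\gamma$ away from the points where $\Omega_2$ meets $\Omega_1$.

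Suppose $[\Omega_2]=0$ in $H_1(S;\Z/2)$ and write $\Omega_2=\partial\big(\sum_{F\in\mathcal F}F\big)$ for a set $\mathcal F$ of faces of the systolic triangulation of $S$. Because $\partial\big(\sum_{F\in\mathcal F}F\big)$ contains no edge of $\Omega_1$, across every edge of $\Omega_1$ the two incident triangulation faces both lie in $\mathcal F$ or both lie outside it; hence membership in $\mathcal F$ descends to a $2$-colouring of the faces (connected components) of $S\setminus\Omega_2$ for which two faces meeting along a sub-arc of $\Omega_2$ get opposite colours. Pulling this colouring back through $p$ gives a $G$-invariant $2$-colouring of the faces of $\mathbb{D}\setminus p^{-1}(\Omega_2)$, and as one travels along $\tilde\gamma$ the colour of the ambient face switches exactly at the points of $\tilde\gamma\cap p^{-1}(\Omega_2)$ and nowhere else (in particular not when $\tilde\gamma$ crosses $p^{-1}(\Omega_1)$). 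Therefore $|\tilde\gamma\cap p^{-1}(\Omega_2)|$ has the parity of the difference of the colours of the faces containing $\tilde\gamma(0)$ and $\tilde\gamma(1)=g\cdot\tilde\gamma(0)$, which is $0$ by $G$-invariance.

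It remains to show $[\Omega_2]=0$ in $H_1(S;\Z/2)$, for which I would invoke the explicit systolic triangulation of $S$ (Theorem 3.1 of~\cite{MR3877282}): each octagon $P_1,P_2$ is cut by its four diagonals into eight triangles meeting at the octagon's centre, each such triangle having two sides in $\Omega_2$ (half-diagonals) and one side in $\Omega_1$ (a side of the octagon, so after identification an edge lying on one of $a,b,c,d$). Let $\mathcal B$ be the set of those faces whose $\Omega_1$-side lies on $a$ or on $c$. Reading the identifications off Figure~\ref{fig1.1}, in each of $P_1$ and $P_2$ the four sides lying on $a\cup c$ occupy alternate positions among the eight sides, so $\mathcal B$ is an alternating quadruple of triangles in $P_1$ together with an alternating quadruple in $P_2$. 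Computing $\partial\big(\sum_{T\in\mathcal B}T\big)$ over $\Z/2$: within each octagon, summing an alternating quadruple of its eight triangles makes each of its eight half-diagonals appear exactly once, so all sixteen half-diagonals survive and contribute $\Omega_2$; the remaining terms are the four $\Omega_1$-sides of the $\mathcal B$-triangles in $P_1$ together with those in $P_2$, which on both sides are precisely the four edges of $S$ lying on $a$ or on $c$ and hence cancel modulo $2$. Thus $\partial\big(\sum_{T\in\mathcal B}T\big)=\Omega_2$, so $[\Omega_2]=0$; the resulting colouring is well defined because the two triangles abutting any $\Omega_1$-edge share it as their $\Omega_1$-side and so are both in $\mathcal B$ or both outside it.

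The step I expect to require the most care is the bookkeeping against Figure~\ref{fig1.1} in the previous paragraph: one must check that the alternating quadruple of sides of $P_1$ lying on $a\cup c$ and the alternating quadruple of sides of $P_2$ lying on $a\cup c$ glue to exactly the same four edges of $S$, so that their mod-$2$ sum is zero; this is the only place the precise side-pairing of the two octagons is used, and running the same computation with $b,d$ in place of $a,c$ provides an independent check. The transversality and perturbation invoked in the first paragraph are routine and handled exactly as in the proof of Lemma~\ref{Thm4.2}.
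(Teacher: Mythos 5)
Your proof is correct, and it takes a genuinely different route from the paper's. The paper argues by induction on the number $2n$ of octagon tiles that $\Tilde{\gamma}$ traverses: the base case $n=1$ is checked against two explicit model curves (a systole of $\Omega_1$ and the curve $\delta$ of Figure~\ref{fig:3.2}, meeting $\Omega_2$ in $4$ and $8$ points), and the inductive step cuts $\Tilde{\gamma}$ into sub-arcs $\delta_1,\dots,\delta_4$ as in Figure~\ref{fig:4.4} and compares counts. You instead exhibit an explicit mod-$2$ bounding chain for $\Omega_2$ --- the triangles whose $\Omega_1$-side lies on $a\cup c$ --- and conclude by a checkerboard parity argument pulled back to $\mathbb{D}$. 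Your bookkeeping against Figure~\ref{fig1.1} checks out: reading the labels cyclically, the sides of $P_1$ run $c,d,a,b,c,d,a,b$ and those of $P_2$ run $c,b,a,d,c,b,a,d$, so in each octagon the eight triangles alternate in and out of $\mathcal{B}$, each half-diagonal acquires coefficient $1$, and each of the edges $a_1,a_2,c_1,c_2$ is picked up once from each octagon and cancels mod $2$; hence $\partial\bigl(\sum_{T\in\mathcal B}T\bigr)=\Omega_2$ and the colouring flips exactly across $\Omega_2$ and never across $\Omega_1$. What your route buys: it removes the case analysis and the implicit homotopy-invariance of intersection counts in the paper's base case and inductive step (e.g.\ the unproved equality $|\delta_2\cap p^{-1}(\Omega_2)|=|\delta_4\cap p^{-1}(\Omega_2)|$), it never uses simplicity of $\Tilde{\gamma}$, and it concentrates all the geometry into one finite check of the gluing pattern (with the $b,d$ variant as a sanity check). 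What the paper's route buys is that it is elementary, with no appeal to homology, and it produces the concrete counts $4$ and $8$ used as guiding examples.

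One caveat, which you share with the paper rather than introduce: the lemma's hypothesis only excludes the vertices of $p^{-1}(\Omega_1)$, so $\Tilde{\gamma}$ is still allowed to pass through the centre of a tile, where eight half-diagonal lifts meet; there a perturbation genuinely changes the number of intersection points with $p^{-1}(\Omega_2)$, so your argument (like the paper's induction, which also treats crossings generically) proves the statement for arcs transverse to $p^{-1}(\Omega_2)$ meeting it in edge interiors. This is exactly the situation arranged in the proof of Theorem~\ref{thm:3.1}, so nothing is lost in the application, but if you want the lemma verbatim you should either add this transversality to the hypotheses or count intersections geodesic by geodesic.
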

    
    \begin{proof}
    \begin{figure}
    \centering
    \begin{tikzpicture}
        \draw [bend left,-<-=.5] ({2*cos(22.7)},{2*sin(22.7)}) to ({2*cos(67.5)},{2*sin(67.5)});
         \draw [bend left,-<-=.5] ({2*cos(67.5)},{2*sin(67.5)}) to ({2*cos(112.5)},{2*sin(112.5)});
         \draw [bend left,-<-=.5] ({2*cos(112.5)},{2*sin(112.5)}) to ({2*cos(157.5)},{2*sin(157.5)});
          \draw [bend left,-<-=.47] ({2*cos(157.5)},{2*sin(157.5)}) to ({2*cos(202.5)},{2*sin(202.5)});
          \draw [bend left,->-=.5] ({2*cos(202.5)},{2*sin(202.5)}) to ({2*cos(247.5)}, {2*sin(247.5)});
          \draw [bend left,->-=.5] ({2*cos(247.5)}, {2*sin(247.5)}) to ({2*cos(292.5)},{2*sin(292.5)});
          \draw [bend left,->-=.5] ({2*cos(292.5)},{2*sin(292.5)}) to ({2*cos(337.5)},{2*sin(337.5)});
         \draw [dotted,thick] ({2*cos(337.5)},{2*sin(337.5)}) to ({2*cos(22.7)},{2*sin(22.7)});

          \draw [bend left,-<-=.5] ({4*cos(22.7)+2*cos(22.7)},{2*sin(22.7)}) to ({4*cos(22.7)+2*cos(67.5)},{2*sin(67.5)});
          \draw [bend left,->-=.5] ({4*cos(22.7)+2*cos(67.5)},{2*sin(67.5)}) to ({4*cos(22.7)+2*cos(112.5)},{2*sin(112.5)});
          \draw [bend left,-<-=.5] ({4*cos(22.7)+2*cos(112.5)},{2*sin(112.5)}) to ({4*cos(22.7)+2*cos(157.5)},{2*sin(157.5)});
          \draw [bend left,->-=.5] ({4*cos(22.7)+2*cos(202.5)},{2*sin(202.5)}) to ({4*cos(22.7)+2*cos(247.5)}, {2*sin(247.5)});
          \draw [bend left,-<-=.5] ({4*cos(22.7)+2*cos(247.5)}, {2*sin(247.5)}) to ({4*cos(22.7)+2*cos(292.5)},{2*sin(292.5)});
          \draw [bend left,->-=.5] ({4*cos(22.7)+2*cos(292.5)},{2*sin(292.5)}) to ({4*cos(22.7)+2*cos(337.5)},{2*sin(337.5)});
         \draw [bend left,->-=.6] ({4*cos(22.7)+2*cos(337.5)},{2*sin(337.5)}) to ({4*cos(22.7)+2*cos(22.7)},{2*sin(22.7)});

         \draw[thick,red] (-1.61,0) to ({4*cos(22.7)+1.61},0);

         \draw  ({1.9*cos(45)},{1.9*sin(45)}) node {$d_2$} ({1.9*cos(90)},{1.9*sin(90)}) node {$a_1$} ({1.9*cos(135)},{1.9*sin(135)}) node {$b_1$} ({1.9*cos(180)},{1.9*sin(180)}) node {$c_1$} ({1.9*cos(225)},{1.9*sin(225)}) node {$d_1$} ({1.9*cos(270)},{1.9*sin(270)}) node {$a_2$} ({1.93*cos(315)},{1.93*sin(315)}) node {$b_2$};

         \draw ({4*cos(22.7)+1.93*cos(0)},{1.93*sin(0)}) node {$c_1$} ({4*cos(22.7)+1.9*cos(45)},{1.9*sin(45)}) node {$b_2$} ({4*cos(22.7)+1.9*cos(90)},{1.9*sin(90)}) node {$a_1$} ({4*cos(22.7)+1.9*cos(135)},{1.9*sin(135)}) node {$d_1$}  ({4*cos(22.7)+1.9*cos(225)},{1.9*sin(225)}) node {$b_1$} ({4*cos(22.7)+1.9*cos(270)},{1.9*sin(270)}) node {$a_2$} ({4*cos(22.7)+1.93*cos(315)},{1.93*sin(315)}) node {$d_2$};

         \draw (.5,-.25) node{$\delta$};
    \end{tikzpicture}
    \caption{}
    \label{fig:3.2}
    \end{figure}

    The arc $\Tilde{\gamma}$ alternatively passes through octagons of Type I and Type II, as $\Tilde{\gamma}$ does not pass through the vertices. Since the initial point and the end point are identified, $\Tilde{\gamma}$ passes through an even number of octagons, say $2n$. Now, we use mathematical induction on $n$ to complete the proof of the lemma. If $n=1$, then $p(\Tilde{\gamma})$ is either homotopic to a systolic geodesic $\alpha\in \Omega_1$ or homotopic to a geodesic $\delta$, as shown in Figure \ref{fig:3.2}. We have $|\alpha\cap \Omega_2|=4$ and $|\delta\cap \Omega_2|=8$. Therefore, it follows that $\Tilde{\gamma}$ intersects $p^{-1}(\Omega_2)$ at even number of points.

    Next, suppose that the lemma holds true for $n$, where $n\geq1$. Let an arc $\Tilde{\gamma}$ pass through $2(n+1)$ octagons $P_1,\dots,P_{2n+2}$. In order to use the induction hypothesis, we homotope the arc $\Tilde{\gamma}$ to another arc $\Tilde{\gamma}'$ which is the union of the arcs $\delta_i, i=1,\dots ,4$ (see Figure \ref{fig:4.4}). We choose the terminal points of $\delta_1$ in $P_{2n}$ such that both the endpoints of $\delta_1$ are identified. Then, by induction hypothesis $|\delta_1\cap p^{-1}(\Omega_2)|$ is even; the case $n=1$ implies  $|\delta_3\cap p^{-1}(\Omega_2)|$ is even and $|\delta_2\cap p^{-1}(\Omega_2)|=|\delta_4\cap p^{-1}(\Omega_2)|$. Hence, $\Tilde{\gamma}'$ intersects $p^{-1}(\Omega_2)$ at an even number of points and so does $\Tilde{\gamma}$.
    \begin{figure}[htbp]
        \centering
        \begin{tikzpicture}
            \draw [-<-=.5] ({cos(22.7)},{sin(22.7)}) to ({cos(67.5)},{sin(67.5)});
         \draw [-<-=.5] ({cos(67.5)},{sin(67.5)}) to ({cos(112.5)},{sin(112.5)});
         \draw [-<-=.5] ({cos(112.5)},{sin(112.5)}) to ({cos(157.5)},{sin(157.5)});
          \draw [-<-=.5] ({cos(157.5)},{sin(157.5)}) to ({cos(202.5)},{sin(202.5)});
          \draw [->-=.5] ({cos(202.5)},{sin(202.5)}) to ({cos(247.5)}, {sin(247.5)});
          \draw [->-=.5] ({cos(247.5)}, {sin(247.5)}) to ({cos(292.5)},{sin(292.5)});
          \draw [->-=.5] ({cos(292.5)},{sin(292.5)}) to ({cos(337.5)},{sin(337.5)});

         \draw ({1.13*cos(8)},{1.13*sin(8)}) node{\scriptsize$c_2$} ({1.12*cos(50)},{1.12*sin(50)}) node{\scriptsize$d_2$} ({1.1*cos(90)},{1.1*sin(90)}) node{\scriptsize$a_1$} ({1.18*cos(135)},{1.18*sin(135)}) node{\scriptsize$b_1$} ({1.16*cos(180)},{1.16*sin(180)}) node{\scriptsize$c_1$} ({1.14*cos(225)},{1.14*sin(225)}) node{\scriptsize$d_1$} ({1.1*cos(280)},{1.1*sin(280)}) node{\scriptsize$a_2$} ({1.18*cos(310)},{1.18*sin(310)}) node{\scriptsize$b_2$} ;

         \draw [-<-=.45] ({2*cos(22.7)+cos(22.7)},{sin(22.7)}) to ({2*cos(22.7)+cos(67.5)},{sin(67.5)});
         \draw [->-=.5] ({2*cos(22.7)+cos(67.5)},{sin(67.5)}) to ({2*cos(22.7)+cos(112.5)},{sin(112.5)});
         \draw [-<-=.5] ({2*cos(22.7)+cos(112.5)},{sin(112.5)}) to ({2*cos(22.7)+cos(157.5)},{sin(157.5)});
          \draw [-<-=.5] ({2*cos(22.7)+cos(157.5)},{sin(157.5)}) to ({2*cos(22.7)+cos(202.5)},{sin(202.5)});
          \draw [->-=.5] ({2*cos(22.7)+cos(202.5)},{sin(202.5)}) to ({2*cos(22.7)+cos(247.5)}, {sin(247.5)});
          \draw [-<-=.5] ({2*cos(22.7)+cos(247.5)}, {sin(247.5)}) to ({2*cos(22.7)+cos(292.5)},{sin(292.5)});
          \draw [->-=.5] ({2*cos(22.7)+cos(292.5)},{sin(292.5)}) to ({2*cos(22.7)+cos(337.5)},{sin(337.5)});
         \draw [->-=.6] ({2*cos(22.7)+cos(337.5)},{sin(337.5)}) to ({2*cos(22.7)+cos(22.7)},{sin(22.7)});

         \draw ({2*cos(22.7)+1.2*cos(0)},{1.2*sin(0)}) node {\scriptsize$c_1$} ({2*cos(22.7)+1.1*cos(50)},{1.1*sin(50)}) node {\scriptsize$b_2$} ({2*cos(22.7)+1.15*cos(90)},{1.15*sin(90)}) node {\scriptsize$a_1$} ({2*cos(22.7)+1.12*cos(130)},{1.12*sin(130)}) node {\scriptsize$d_1$} ({2*cos(22.7)+1.1*cos(230)},{1.1*sin(230)}) node {\scriptsize$b_1$} ({2*cos(22.7)+1.1*cos(280)},{1.1*sin(280)}) node {\scriptsize$a_2$} ({2*cos(22.7)+1.14*cos(315)},{1.14*sin(315)}) node {\scriptsize$d_2$};

          \draw [->-=.5] ({5+cos(22.7)},{1.5+sin(22.7)}) to ({5+cos(67.5)},{1.5+sin(67.5)});
         \draw [-<-=.3] ({5+cos(67.5)},{1.5+sin(67.5)}) to ({5+cos(112.5)},{1.5+sin(112.5)});
         \draw [-<-=.5] ({5+cos(112.5)},{1.5+sin(112.5)}) to ({5+cos(157.5)},{1.5+sin(157.5)});
          \draw [->-=.5] ({5+cos(157.5)},{1.5+sin(157.5)}) to ({5+cos(202.5)},{1.5+sin(202.5)});
          \draw [-<-=.5] ({5+cos(202.5)},{1.5+sin(202.5)}) to ({5+cos(247.5)}, {1.5+sin(247.5)});
          \draw [->-=.5] ({5+cos(247.5)}, {1.5+sin(247.5)}) to ({5+cos(292.5)},{1.5+sin(292.5)});
          \draw [->-=.5] ({5+cos(292.5)},{1.5+sin(292.5)}) to ({5+cos(337.5)},{1.5+sin(337.5)});

         \draw ({5+1.16*cos(350)},{1.5+1.16*sin(350)}) node {\scriptsize$b_2$} ({5+1.1*cos(50)},{1.5+1.1*sin(50)}) node {\scriptsize$a_1$} ({5+1.1*cos(90)},{1.5+1.1*sin(90)}) node {\scriptsize$d_1$} ({5+1.16*cos(135)},{1.5+1.16*sin(135)}) node {\scriptsize$c_2$} ({5+1.16*cos(180)},{1.5+1.16*sin(180)}) node {\scriptsize$b_1$} ({5+1.16*cos(215)},{1.5+1.16*sin(210)}) node {\scriptsize$a_2$} ({5+1.1*cos(280)},{1.5+1.1*sin(280)}) node {\scriptsize$d_2$} ({5+1.16*cos(310)},{1.5+1.16*sin(310)}) node {\scriptsize$c_1$};

         \draw [->-=.5] ({5+2*cos(22.7)+cos(67.5)},{1.5+sin(67.5)}) to ({5+2*cos(22.7)+cos(112.5)},{1.5+sin(112.5)});
         \draw [->-=.5] ({5+2*cos(22.7)+cos(112.5)},{1.5+sin(112.5)}) to ({5+2*cos(22.7)+cos(157.5)},{1.5+sin(157.5)});
          \draw [->-=.45] ({5+2*cos(22.7)+cos(157.5)},{1.5+sin(157.5)}) to ({5+2*cos(22.7)+cos(202.5)},{1.5+sin(202.5)});
          \draw [->-=.5] ({5+2*cos(22.7)+cos(202.5)},{1.5+sin(202.5)}) to ({5+2*cos(22.7)+cos(247.5)}, {1.5+sin(247.5)});
          \draw [-<-=.5] ({5+2*cos(22.7)+cos(247.5)}, {1.5+sin(247.5)}) to ({5+2*cos(22.7)+cos(292.5)},{1.5+sin(292.5)});
          \draw [-<-=.5] ({5+2*cos(22.7)+cos(292.5)},{1.5+sin(292.5)}) to ({5+2*cos(22.7)+cos(337.5)},{1.5+sin(337.5)});
         \draw [-<-=.6] ({5+2*cos(22.7)+cos(337.5)},{1.5+sin(337.5)}) to ({5+2*cos(22.7)+cos(22.7)},{1.5+sin(22.7)});

         \draw ({5+2*cos(22.7)+1.16*cos(355)},{1.5+1.16*sin(355)}) node {\scriptsize $b_1$}  ({5+2*cos(22.7)+1.12*cos(50)},{1.5+1.12*sin(50)}) node {\scriptsize $c_1$} ({5+2*cos(22.7)+1.16*cos(95)},{1.5+1.16*sin(95)}) node {\scriptsize $d_1$} ({5+2*cos(22.7)+1.16*cos(130)},{1.5+1.16*sin(130)}) node {\scriptsize $a_2$} ({5+2*cos(22.7)+1.14*cos(230)},{1.5+1.14*sin(230)}) node {\scriptsize $c_2$} ({5+2*cos(22.7)+1.12*cos(280)},{1.5+1.12*sin(280)}) node {\scriptsize $d_2$} ({5+2*cos(22.7)+1.16*cos(315)},{1.5+1.16*sin(315)}) node {\scriptsize $a_1$};

         \draw [-<-=.5] ({2*cos(22.7)*cos(45)+5+2*cos(22.7)+cos(22.7)},{2*cos(22.7)*sin(45)+1.5+sin(22.7)}) to ({2*cos(22.7)*cos(45)+5+2*cos(22.7)+cos(67.5)},{2*cos(22.7)*sin(45)+1.5+sin(67.5)});
         \draw [->-=.5] ({2*cos(22.7)*cos(45)+5+2*cos(22.7)+cos(67.5)},{2*cos(22.7)*sin(45)+1.5+sin(67.5)}) to ({2*cos(22.7)*cos(45)+5+2*cos(22.7)+cos(112.5)},{2*cos(22.7)*sin(45)+1.5+sin(112.5)});
         \draw [-<-=.5] ({2*cos(22.7)*cos(45)+5+2*cos(22.7)+cos(112.5)},{2*cos(22.7)*sin(45)+1.5+sin(112.5)}) to ({2*cos(22.7)*cos(45)+5+2*cos(22.7)+cos(157.5)},{2*cos(22.7)*sin(45)+1.5+sin(157.5)});
          \draw [->-=.5] ({2*cos(22.7)*cos(45)+5+2*cos(22.7)+cos(157.5)},{2*cos(22.7)*sin(45)+1.5+sin(157.5)}) to ({2*cos(22.7)*cos(45)+5+2*cos(22.7)+cos(202.5)},{2*cos(22.7)*sin(45)+1.5+sin(202.5)});
          \draw [->-=.4] ({2*cos(22.7)*cos(45)+5+2*cos(22.7)+cos(202.5)},{2*cos(22.7)*sin(45)+1.5+sin(202.5)}) to ({2*cos(22.7)*cos(45)+5+2*cos(22.7)+cos(247.5)}, {2*cos(22.7)*sin(45)+1.5+sin(247.5)});
          \draw [-<-=.35] ({2*cos(22.7)*cos(45)+5+2*cos(22.7)+cos(247.5)}, {2*cos(22.7)*sin(45)+1.5+sin(247.5)}) to ({2*cos(22.7)*cos(45)+5+2*cos(22.7)+cos(292.5)},{2*cos(22.7)*sin(45)+1.5+sin(292.5)});
          \draw [->-=.5] ({2*cos(22.7)*cos(45)+5+2*cos(22.7)+cos(292.5)},{2*cos(22.7)*sin(45)+1.5+sin(292.5)}) to ({2*cos(22.7)*cos(45)+5+2*cos(22.7)+cos(337.5)},{2*cos(22.7)*sin(45)+1.5+sin(337.5)});
         \draw [-<-=.4] ({2*cos(22.7)*cos(45)+5+2*cos(22.7)+cos(337.5)},{2*cos(22.7)*sin(45)+1.5+sin(337.5)}) to ({2*cos(22.7)*cos(45)+5+2*cos(22.7)+cos(22.7)},{2*cos(22.7)*sin(45)+1.5+sin(22.7)});

         \draw ({2*cos(22.7)*cos(45)+5+2*cos(22.7)+1.1*cos(275)},{2*cos(22.7)*sin(45)+1.5+1.1*sin(275)}) node{\scriptsize$b_2$} ({2*cos(22.7)*cos(45)+5+2*cos(22.7)+1.16*cos(320)},{2*cos(22.7)*sin(45)+1.5+1.16*sin(320)}) node{\scriptsize$a_1$} ({2*cos(22.7)*cos(45)+5+2*cos(22.7)+1.2*cos(0)},{2*cos(22.7)*sin(45)+1.5+1.2*sin(0)}) node{\scriptsize$d_1$} ({2*cos(22.7)*cos(45)+5+2*cos(22.7)+1.1*cos(45)},{2*cos(22.7)*sin(45)+1.5+1.1*sin(45)}) node{\scriptsize$c_2$} ({2*cos(22.7)*cos(45)+5+2*cos(22.7)+1.2*cos(90)},{2*cos(22.7)*sin(45)+1.5+1.2*sin(90)}) node{\scriptsize$b_1$} ({2*cos(22.7)*cos(45)+5+2*cos(22.7)+1.18*cos(135)},{2*cos(22.7)*sin(45)+1.5+1.18*sin(135)}) node{\scriptsize$a_2$} ({2*cos(22.7)*cos(45)+5+2*cos(22.7)+1.16*cos(170)},{2*cos(22.7)*sin(45)+1.5+1.16*sin(170)}) node{\scriptsize$d_2$};

         \draw (0,-1.5) node {$P_1$};
         \draw ({2*cos(22.7)},-1.5) node {$P_2$};
         \draw (5,0) node {$P_{2n}$};
         \draw ({5+2*cos(22.7)},{1.5-1.5}) node {$P_{2n+1}$};
         \draw (9.3,1.7) node {$P_{2n+2}$};

         \draw ({.95*cos(230)},{.95*sin(230)})..controls(0,-.2) and ({2*cos(22.7)},-.5)..({2*cos(22.7)+.95*cos(35)},{.95*sin(35)});
         \draw[dashed] ({2*cos(22.7)+.95*cos(35)},{.95*sin(35)})..controls (3.2,1.1)and(4.2,.3)..({5+.95*cos(240)},{1.5+.95*sin(240)});
         \draw ({5+.95*cos(240)},{1.5+.95*sin(240)})..controls(5,1.5)and(7,1.4).. ({2*cos(22.7)*cos(45)+5+2*cos(22.7)+.95*cos(10)},{2*cos(22.7)*sin(45)+1.5+.95*sin(10)});

         \draw[ForestGreen] ({.95*cos(230)},{.95*sin(230)})..controls(0,0.2) and ({2*cos(22.7)},-.3)..({2*cos(22.7)+.95*cos(45)},{.95*sin(45)});
         \draw[dashed,ForestGreen] ({2*cos(22.7)+.95*cos(45)},{.95*sin(45)})..controls (3.2,1.3)and(4.2,.5)..({5+.95*cos(230)},{1.5+.95*sin(230)});
         \draw[ForestGreen] ({5+.95*cos(230)},{1.5+.95*sin(230)})..controls(5.1,1.2)and(4,2)..({5+.92*cos(95)},{1.5+.92*sin(95)});
         \draw[red] ({5+.92*cos(95)},{1.5+.92*sin(95)})..controls (5.3,2.5)and(5.6,1.6)..({5+.93*cos(10)},{1.5+.93*sin(10)});
         \draw[blue] ({5+.93*cos(10)},{1.5+.93*sin(10)})..controls(6.8,1.8)and(7,2.5)..({2*cos(22.7)*cos(45)+5+2*cos(22.7)+.93*cos(280)},{2*cos(22.7)*sin(45)+1.5+.93*sin(280)});
         \draw[red] ({2*cos(22.7)*cos(45)+5+2*cos(22.7)+.93*cos(280)},{2*cos(22.7)*sin(45)+1.5+.93*sin(280)})..controls(8.4,1.88) and (8.45,1.95)..({2*cos(22.7)*cos(45)+5+2*cos(22.7)+.95*cos(10)},{2*cos(22.7)*sin(45)+1.5+.95*sin(10)});

         \draw (0,-.05) node{\scriptsize$\delta_1$} (5.4,1.9) node{\scriptsize$\delta_2$} (6.6,2.1) node{\scriptsize$\delta_3$} (8.42,2.25) node{\scriptsize$\delta_4$} (1.8,-.28) node{\scriptsize$\Tilde{\gamma}$};
        \end{tikzpicture}
        \caption{Deformation of the arc $\Tilde{\gamma}$}
        \label{fig:4.4}
    \end{figure}
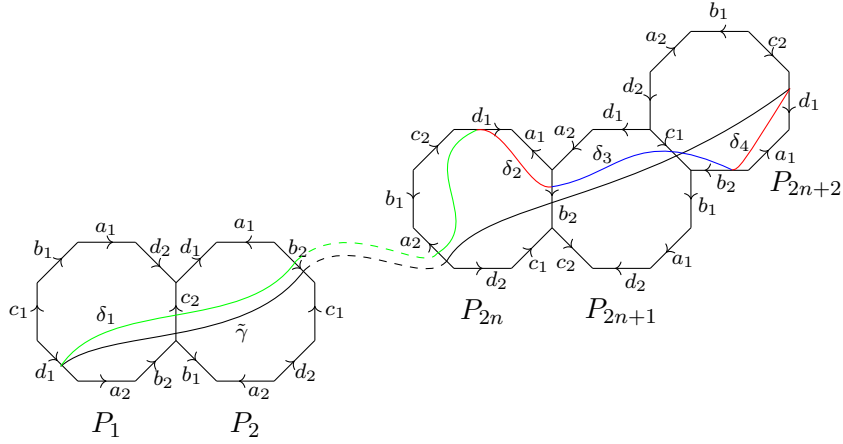
\end{proof}

Now we are ready to prove Theorem \ref{thm:3.1}.

\begin{proof}[Proof of Theorem \ref{thm:3.1}]
    Let $\gamma$ be a non-systolic simple closed geodesic on the Bolza surface $S$. If $\gamma\in\Gamma$, then from  Lemma \ref{lemma:16geodesics}, it follows that $i(\Sys(S),\gamma )=10 \text{ or } 12$. In the remaining cases, we may assume that $\gamma$ does not pass through any of the points of intersection of $\Sys (S)$, otherwise, we deform the curve as in Figure \ref{fig1.2} and this deformation does not change the number of points of intersection. As $\Omega_1$ fills the surface $S$, the geodesic $\gamma$ intersects at least one geodesic of $\Omega_1$. Let $\Tilde{\gamma}$ be a path lift of $\gamma$ with initial point on $p^{-1}(\Omega_1)$. Then, $\Tilde{\gamma}$ satisfies all the hypothesis of Lemma \ref{lemma 4.4} and hence $\Tilde{\gamma}$ intersects $p^{-1}(\Omega_2)$ at an even number of points. This implies that $i(\Omega_2,\gamma)$ is even. The rest of the proof follows from Lemma \ref{Thm4.2}.
\end{proof}

\subsection{Even Complexity} In this subsection, we prove that the $k$-th complexity $\mathcal{G}_k(\Sys(S))$ of $\Sys(S)$ is nonzero for infinitely many even integers $k$. In particular, we prove Theorem \ref{thm:3.8}. Before proving this theorem, we establish a simple but interesting result that gives a necessary and sufficient condition for the complexity of a collection of curves to be finite. 

\begin{proposition}\label{prop:3.6}
    Let $\Sigma$ be a curve system on a hyperbolic surface $X$. Then the $k$-th complexity $\mathcal{G}_k(\Sigma)$ is finite if and only if $\Sigma$ is a filling.
\end{proposition}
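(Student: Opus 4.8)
The plan is to establish the equivalence in the form ``$\Gamma$ fills $X$'' $\iff$ ``$\mathcal{T}_k(\Gamma)<\infty$ for every $k$'' (this is the natural reading of the statement: $\mathcal{T}_0(\Gamma)$ can be finite without $\Gamma$ filling, e.g.\ when $\Gamma$ contains a pants decomposition). Throughout I would replace $\Gamma$ and any candidate curve $\gamma$ by their geodesic representatives, so that these are in pairwise minimal position and $\sum_{\alpha\in\Gamma}i(\alpha,\gamma)$ equals the number of transverse intersection points of $\gamma$ with the geodesic graph $G:=\bigcup_{\alpha\in\Gamma}\alpha$.

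For the direction ``$\Gamma$ fills $\Rightarrow\mathcal{T}_k(\Gamma)<\infty$'', I would first record that cutting $X$ along $G$ produces finitely many faces, each a polygon, and set $D:=\max_F\operatorname{diam}(\overline F)<\infty$. The key geometric observation is that in $\mathbb{H}^2$ the preimage $p^{-1}(G)$ is a locally finite arrangement of complete geodesics cutting $\mathbb{H}^2$ into convex cells, and, since the faces of $G$ are simply connected, each cell maps isometrically onto a face, hence is bounded of diameter $\le D$. Then, given an essential closed geodesic $\gamma$ with $i(\gamma,\Gamma)=n$, necessarily $n\ge 1$ (else $\gamma$ would lie in one simply connected face), and $G$ cuts $\gamma$ into $n$ geodesic arcs, each lying in a face; lifting such an arc yields a geodesic segment confined to a convex cell, so of length $\le D$. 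Hence $\operatorname{length}(\gamma)\le kD$ whenever $i(\gamma,\Gamma)\le k$, and since a closed hyperbolic surface has only finitely many closed geodesics of length below any bound and distinct elements of $\mathcal{C}(X)$ have distinct geodesic representatives, only finitely many $\gamma\in\mathcal{C}(X)$ satisfy $i(\gamma,\Gamma)\le k$.

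For the direction ``$\Gamma$ does not fill $\Rightarrow\mathcal{T}_k(\Gamma)=\infty$ for all large $k$'', I would start from a non-disk face $\overline F$ of $G$ and take $c$ to be a boundary component of $\overline F$ pushed into its interior: then $c$ is disjoint from $G$ (so $i(c,\alpha)=0$ for every $\alpha\in\Gamma$), and $c$ is essential in $\overline F$ because $\overline F$ is not a disk. A standard innermost-disk argument shows that a curve disjoint from $G$ and null-homotopic in $X$ is already null-homotopic in its face, so $c$ is essential in $X$. Fixing an essential simple closed curve $\delta$ with $i(\delta,c)\ge 1$ (which exists since $g\ge 2$) and setting $k_0:=i(\delta,\Gamma)$, the Dehn twist $T_c$ preserves each $i(\cdot,\alpha)$ (as $c$ misses every component of $\Gamma$), so $i(T_c^{\,n}\delta,\Gamma)=k_0$ for all $n\in\mathbb{Z}$; meanwhile the standard intersection estimate for Dehn twists gives $i(T_c^{\,n}\delta,\delta)\to\infty$, so $\{T_c^{\,n}\delta\}_{n\in\mathbb{Z}}$ are pairwise non-isotopic and all but finitely many avoid the finite set $\Gamma$. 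Hence $\mathcal{T}_{k_0}(\Gamma)=\infty$, and therefore $\mathcal{T}_k(\Gamma)=\infty$ for all $k\ge k_0$.

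I expect the filling direction to be essentially mechanical once the ``cells of $p^{-1}(G)$ are convex and isometric to faces'' observation is set up; the only delicate point there is confirming that a geodesic arc trapped in a face cannot be long, which is precisely the convexity of the lifted cell. The main obstacle I anticipate is the opening step of the non-filling direction: one must verify carefully that a boundary component of a non-disk face, isotoped inward, is genuinely essential in $X$, handling possible degenerate self-identifications along $\partial\overline F$, and confirm that $\{T_c^{\,n}\delta\}$ really meets $\mathcal{C}(X)\setminus\Gamma$ in an infinite set. That innermost-disk bookkeeping is where the actual work lies; the remainder is routine.
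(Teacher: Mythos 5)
Your proposal is correct and takes essentially the same approach as the paper: for the filling direction you bound the length of any candidate geodesic by $k$ times the diameter of a tile (using the cells of $p^{-1}\bigl(\bigcup_{\alpha\in\Gamma}\alpha\bigr)$ where the paper uses a polygonal fundamental domain built from the lifts of $\Gamma$) and then invoke the finiteness of closed geodesics of bounded length, and for the non-filling direction you run the same Dehn twist argument about an essential curve disjoint from $\Gamma$, producing infinitely many distinct curves with constant intersection number. The extra care you take (essentiality of the pushed-in boundary curve, monotonicity of $\mathcal{T}_k$ in $k$, excluding the finitely many twisted curves that might lie in $\Gamma$) only fills in details the paper leaves implicit.
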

\begin{proof}
    ($\Leftarrow$) Suppose $\Sigma$ is a filling system in $X$. Let $X\approx\Ha/G$, where $G$ is a Fuchsian group and $\Ha$ is the upper half plane. Let $F$ be a polygonal fundamental domain for $G$ determined by the lifts of $\Sigma$. Let $\delta$ be a simple closed curve on $X$ with $i(\Sigma, \delta)=k$ and $\Tilde{\delta}$ be a path lift of $\delta$ with the initial point on the boundary of $F$. Then $\Tilde{\delta}$ passes through at most $k$ copies of $F$ and this implies that $l_X(\gamma)\leq k\cdot d_F$, where $d_F$ is the diameter of $F$. As there are only finitely many simple closed geodesics in a hyperbolic surface with bounded length (Lemma 12.4 \cite{FarbMargalit}), the result follows.

    ($\Rightarrow$) If $\Sigma$ is not a filling, then there exists a simple closed curve $\beta$ such that $i(\Sigma,\beta)=0$. Now, we consider a simple closed curve $\alpha_0$ with $i(\alpha_0,\beta)\neq0$ and the Dehn twist $T_\beta$ about the curve $\beta$. Then the curves $\alpha_n=T^n_{\beta}(\alpha_0), n\in \Z,$ are all distinct and satisfies $i(\Sigma,\alpha_0)=i(\Sigma,\alpha_n)$, for all $n\in \Z$. This implies that  $\mathcal{G}_{k_0}(\Sigma)$ is infinite, where $k_0=i(\Sigma,\alpha_0)$.
\end{proof}

\begin{corollary}
    $\mathcal{G}_k(\Sys(S))$ is finite for all $k\in\Z_{\geq0}$.
\end{corollary}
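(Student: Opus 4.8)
The plan is to obtain this Corollary as an immediate consequence of the Proposition just proved, using the fact---already recorded and used repeatedly in Section~3---that the subsystem $\Omega_1=\{a,b,c,d\}$ of $\Sys(S)$ fills $S$. The only small point that needs to be spelled out is that a curve system containing a filling system is itself filling.

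First I would verify this elementary observation: if $\beta\in\mathcal{C}(S)$ is an essential simple closed curve, then $i(\beta,\Omega_1)\neq 0$ since $\Omega_1$ fills; because $\Omega_1\subseteq\Sys(S)=\Omega_1\cup\Omega_2$ we get $\sum_{\alpha\in\Sys(S)}i(\alpha,\beta)\geq\sum_{\alpha\in\Omega_1}i(\alpha,\beta)>0$. Hence no essential simple closed curve is disjoint from $\Sys(S)$, so $\Sys(S)$ is a filling system of $S$.

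Then I would simply apply the Proposition with $\Gamma=\Sys(S)$: since $\Sys(S)$ is a finite system of simple closed geodesics (the $12$ systolic geodesics of~\cite{MR3877282}) and it fills $S$, the forward implication of the Proposition yields that $\mathcal{T}_k(\Sys(S))$ is finite for every $k\in\Z_{\geq 0}$. Equivalently, and a little more explicitly, one can compare directly with $\Omega_1$: any $\gamma$ counted in $\mathcal{T}_k(\Sys(S))$ satisfies $\sum_{\alpha\in\Omega_1}i(\alpha,\gamma)\leq\sum_{\alpha\in\Sys(S)}i(\alpha,\gamma)\leq k$, so the set counted by $\mathcal{T}_k(\Sys(S))$ is contained in $\{\gamma\in\mathcal{C}(S)\setminus\Omega_1: \sum_{\alpha\in\Omega_1}i(\alpha,\gamma)\leq k\}\cup\Omega_2$, whose first part is finite by the Proposition applied to $\Omega_1$ and whose second part has only $8$ elements. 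I do not expect a genuine obstacle here: the substantive content is all contained in Theorem~\ref{thm:3.1} and the Proposition, and the only thing to be careful about is checking that the Proposition's hypotheses are met, i.e.\ that $\Sys(S)$ (equivalently $\Omega_1$) is an honest finite curve system on $S$, so that a polygonal fundamental domain cut out by its lifts exists.
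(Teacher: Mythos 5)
Your proposal is correct and follows the paper's intended route: the corollary is stated as an immediate consequence of the Proposition, using the fact that $\Sys(S)$ (already containing the filling subsystem $\Omega_1$, and indeed triangulating $S$) is a filling system. Your extra verification that a system containing a filling system is itself filling is a harmless and sensible addition.
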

\begin{proof}
    The set $\Sys(S)$ is a filling as it decomposes the surface into triangles (see \cite{MR3877282}). The proof now follows.
\end{proof}

\textbf{An estimate of $\mathcal{G}_k(\Sys(S))$.} Let $\gamma$ be a non-systolic geodesic on $S$ with $i(\Sys (S), \gamma)\leq k$. Then a path lift $\Tilde{\gamma}$ of $\gamma$ passes through at most $k/2$ octagons. So, the length of $\gamma$ is bounded above by $k/2$ times the diameter of an octagon. That is, $l(\gamma)\leq k\cdot \cosh^{-1}\left(1+\sqrt{2}\right)$. By using Theorem 4.1~\cite{Rivin}  due to I. Rivin, we have
$$\mathcal{G}_k(\Sys(S))\leq c\left[k\cdot \cosh^{-1}\left(1+\sqrt{2}\right)\right]^6,$$
where $c$ is a constant.

Now we prove the main theorem of this subsection.

\mainthmtwo*
\begin{proof}
    Here, we explicitly find curves that intersect $\Sys(S)$ at $(10+6n)$ or $(12+6n)$ times, where $n$ is a non-negative integer. Consider the geodesic $\gamma=\gamma_1\cup\gamma_2$ as in Figure \ref{fig:nonsystole}. Then, $i(\Sys(S), \gamma)=10$ and $\gamma$ intersects the systolic geodesic $a=a_1 \cup a_2$ exactly once. Now, consider the family of curves $T^n_a(\gamma)$, where $T_a$ is the Dehn twist around the curve $a$ and $n$ is a non-negative integer. As there are $6$ other systolic geodesics, each of which intersects $a$ exactly once, we have 
    $$i(\Sys(S),T^n_a(\gamma))=10+6n.$$
    Next, consider the geodesic $\beta=\beta_1\cup\beta_2$ (see Figure \ref{fig:nonsystole}). Then $i(\Sys(S),\beta)=12$ and $\beta$ intersects the systolic geodesic $b=b_1 \cup b_2$ exactly once. Now, as in the earlier case 
    $$i(\Sys(S),T^n_b(\beta))=12+6n.$$
\end{proof}

\section{Second systoles}
In this section, we prove the theorem below.
\mainthmthree*
\begin{proof}
    Aurich--Steiner~\cite{AurStein} have shown that the length $l_2$ of the second systolic geodesic of the Bolza surface $S$ is given by $\cosh(l_2/2)=3+2\sqrt{2}$. Consider the Bolza surface as given in Figure~\ref{Fif4.1}(i). The diagonals of the octagon joining opposite vertices project onto simple closed geodesics on $S$. Also, the length of these geodesics is $l_2$. Using the isometries on these geodesics, we get $12$ second systolic geodesics (see Figure~\ref{Fif4.1}). Now, we show that these are the only second systolic geodesics on the Bolza surface.

    Let $\alpha$ be a simple closed curve on $S$. If $\alpha$ is separating, then $\alpha$ contains $4$ arcs joining non-consecutive sides of the octagon and hence $l(\alpha)>l_2$. If $\alpha$ is non-separating, then by Theorem~\ref{thm2.4}, $\alpha$ passes through some fixed points of the hyperelliptic involution $\sigma$ and in order to become a second systolic geodesic, $\alpha$ must pass through exactly two fixed points. This implies that the geodesics shown in Figure~\ref{Fif4.1}(ii) are the only $12$ second systolic geodesics on $S$.
\end{proof}


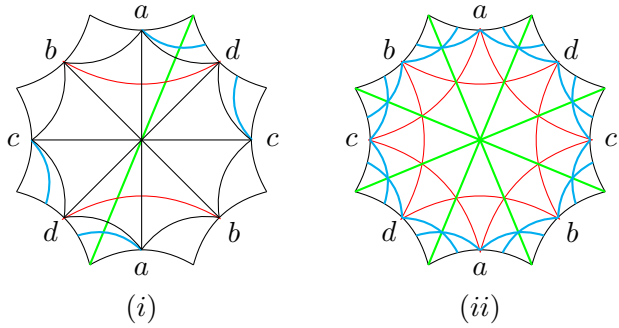
\begin{figure}[htbp]
    \centering
    \begin{tikzpicture}[xscale=.9,yscale=.9]
         \draw [bend left] ({2*cos(22.7)},{2*sin(22.7)}) to ({2*cos(67.5)},{2*sin(67.5)});
         \draw [bend left] ({2*cos(67.5)},{2*sin(67.5)}) to ({2*cos(112.5)},{2*sin(112.5)});
         \draw [bend left] ({2*cos(112.5)},{2*sin(112.5)}) to ({2*cos(157.5)},{2*sin(157.5)});
          \draw [bend left] ({2*cos(157.5)},{2*sin(157.5)}) to ({2*cos(202.5)},{2*sin(202.5)});
          \draw [bend left] ({2*cos(202.5)},{2*sin(202.5)}) to ({2*cos(247.5)}, {2*sin(247.5)});
          \draw [bend left] ({2*cos(247.5)}, {2*sin(247.5)}) to ({2*cos(292.5)},{2*sin(292.5)});
          \draw [bend left] ({2*cos(292.5)},{2*sin(292.5)}) to ({2*cos(337.5)},{2*sin(337.5)});
         \draw [bend left] ({2*cos(337.5)},{2*sin(337.5)}) to ({2*cos(22.7)},{2*sin(22.7)});

         \draw ({1.93*cos(0)},{1.93*sin(0)}) node {$c$} ({1.9*cos(45)},{1.9*sin(45)}) node {$d$} ({1.9*cos(90)},{1.9*sin(90)}) node {$a$} ({1.9*cos(135)},{1.9*sin(135)}) node {$b$} ({1.9*cos(180)},{1.9*sin(180)}) node {$c$} ({1.9*cos(225)},{1.9*sin(225)}) node {$d$} ({1.9*cos(270)},{1.9*sin(270)}) node {$a$} ({1.93*cos(315)},{1.93*sin(315)}) node {$b$};

         \draw[thick,ForestGreen] ({2*cos(67.5)},{2*sin(67.5)})--({2*cos(247.5)}, {2*sin(247.5)});

         \draw[bend left, red] ({1.65*cos(45)},{1.65*sin(45)}) to ({1.65*cos(135)},{1.65*sin(135)});
         \draw[bend left, red] ({1.665*cos(225)},{1.665*sin(225)}) to ({1.65*cos(315)},{1.65*sin(315)});

         \draw [thick,cyan, bend right] ({1.63*cos(90)},{1.63*sin(90)}) to ({1.7*cos(56.25)},{1.7*sin(56.25)});
         \draw [thick,cyan, bend left] ({1.63*cos(180)},{1.63*sin(180)}) to ({1.7*cos(213.75)},{1.7*sin(213.75)});
         \draw [thick,cyan, bend left] ({1.63*cos(0)},{1.63*sin(0)}) to ({1.7*cos(33.75)},{1.7*sin(33.75)});
         \draw [thick,cyan, bend right] ({1.63*cos(270)},{1.63*sin(270)}) to ({1.7*cos(236.25)},{1.7*sin(236.25)});
         
         \draw [bend left] ({5+2*cos(22.7)},{2*sin(22.7)}) to ({5+2*cos(67.5)},{2*sin(67.5)});
         \draw [bend left] ({5+2*cos(67.5)},{2*sin(67.5)}) to ({5+2*cos(112.5)},{2*sin(112.5)});
         \draw [bend left] ({5+2*cos(112.5)},{2*sin(112.5)}) to ({5+2*cos(157.5)},{2*sin(157.5)});
          \draw [bend left] ({5+2*cos(157.5)},{2*sin(157.5)}) to ({5+2*cos(202.5)},{2*sin(202.5)});
          \draw [bend left] ({5+2*cos(202.5)},{2*sin(202.5)}) to ({5+2*cos(247.5)}, {2*sin(247.5)});
          \draw [bend left] ({5+2*cos(247.5)}, {2*sin(247.5)}) to ({5+2*cos(292.5)},{2*sin(292.5)});
          \draw [bend left] ({5+2*cos(292.5)},{2*sin(292.5)}) to ({5+2*cos(337.5)},{2*sin(337.5)});
         \draw [bend left] ({5+2*cos(337.5)},{2*sin(337.5)}) to ({5+2*cos(22.7)},{2*sin(22.7)});

          \draw ({5+1.93*cos(0)},{1.93*sin(0)}) node {$c$} ({5+1.9*cos(45)},{1.9*sin(45)}) node {$d$} ({5+1.9*cos(90)},{1.9*sin(90)}) node {$a$} ({5+1.9*cos(135)},{1.9*sin(135)}) node {$b$} ({5+1.9*cos(180)},{1.9*sin(180)}) node {$c$} ({5+1.9*cos(225)},{1.9*sin(225)}) node {$d$} ({5+1.9*cos(270)},{1.9*sin(270)}) node {$a$} ({5+1.93*cos(315)},{1.93*sin(315)}) node {$b$};

         \draw[bend left, red] ({5+1.65*cos(45)},{1.65*sin(45)}) to ({5+1.65*cos(135)},{1.65*sin(135)});
         \draw[bend left, red] ({5+1.665*cos(225)},{1.665*sin(225)}) to ({5+1.65*cos(315)},{1.65*sin(315)});
         \draw[bend left, red] ({5+1.65*cos(45+45)},{1.65*sin(45+45)}) to ({5+1.65*cos(45+135)},{1.65*sin(45+135)});
         \draw[bend left, red] ({5+1.665*cos(45+225)},{1.665*sin(45+225)}) to ({5+1.65*cos(45+315)},{1.65*sin(45+315)});
         \draw[bend left, red] ({5+1.65*cos(90+45)},{1.65*sin(90+45)}) to ({5+1.65*cos(90+135)},{1.65*sin(90+135)});
         \draw[bend left, red] ({5+1.665*cos(90+225)},{1.665*sin(90+225)}) to ({5+1.65*cos(90+315)},{1.65*sin(90+315)});
         \draw[bend left, red] ({5+1.65*cos(135+45)},{1.65*sin(135+45)}) to ({5+1.65*cos(135+135)},{1.65*sin(135+135)});
         \draw[bend left, red] ({5+1.665*cos(135+225)},{1.665*sin(135+225)}) to ({5+1.65*cos(135+315)},{1.65*sin(135+315)});

         \draw[thick,ForestGreen] ({5+2*cos(67.5)},{2*sin(67.5)})--({5+2*cos(247.5)}, {2*sin(247.5)});
         \draw[thick,ForestGreen] ({5+2*cos(45+67.5)},{2*sin(45+67.5)})--({5+2*cos(45+247.5)}, {2*sin(45+247.5)});
         \draw[thick,ForestGreen] ({5+2*cos(90+67.5)},{2*sin(90+67.5)})--({5+2*cos(90+247.5)}, {2*sin(90+247.5)});
         \draw[thick,ForestGreen] ({5+2*cos(135+67.5)},{2*sin(135+67.5)})--({5+2*cos(135+247.5)}, {2*sin(135+247.5)});

         \draw [thick,cyan, bend right] ({5+1.63*cos(90)},{1.63*sin(90)}) to ({5+1.7*cos(56.25)},{1.7*sin(56.25)});
         \draw [thick,cyan, bend left] ({5+1.63*cos(180)},{1.63*sin(180)}) to ({5+1.7*cos(213.75)},{1.7*sin(213.75)});
         \draw [thick,cyan, bend left] ({5+1.63*cos(0)},{1.63*sin(0)}) to ({5+1.7*cos(33.75)},{1.7*sin(33.75)});
         \draw [thick,cyan, bend right] ({5+1.63*cos(270)},{1.63*sin(270)}) to ({5+1.7*cos(236.25)},{1.7*sin(236.25)});

         \draw [thick,cyan, bend right] ({5+1.63*cos(45+90)},{1.63*sin(45+90)}) to ({5+1.7*cos(45+56.25)},{1.7*sin(45+56.25)});
         \draw [thick,cyan, bend left] ({5+1.63*cos(45+180)},{1.63*sin(45+180)}) to ({5+1.7*cos(45+213.75)},{1.7*sin(45+213.75)});
         \draw [thick,cyan, bend left] ({5+1.63*cos(45+0)},{1.63*sin(45+0)}) to ({5+1.7*cos(45+33.75)},{1.7*sin(45+33.75)});
         \draw [thick,cyan, bend right] ({5+1.63*cos(45+270)},{1.63*sin(45+270)}) to ({5+1.7*cos(45+236.25)},{1.7*sin(45+236.25)});

         \draw [thick,cyan, bend right] ({5+1.63*cos(90+90)},{1.63*sin(90+90)}) to ({5+1.7*cos(90+56.25)},{1.7*sin(90+56.25)});
         \draw [thick,cyan, bend left] ({5+1.63*cos(90+180)},{1.63*sin(90+180)}) to ({5+1.7*cos(90+213.75)},{1.7*sin(90+213.75)});
         \draw [thick,cyan, bend left] ({5+1.63*cos(90+0)},{1.63*sin(90+0)}) to ({5+1.7*cos(90+33.75)},{1.7*sin(90+33.75)});
         \draw [thick,cyan, bend right] ({5+1.63*cos(90+270)},{1.63*sin(90+270)}) to ({5+1.7*cos(90+236.25)},{1.7*sin(90+236.25)});

         \draw [thick,cyan, bend right] ({5+1.63*cos(135+90)},{1.63*sin(135+90)}) to ({5+1.7*cos(135+56.25)},{1.7*sin(135+56.25)});
         \draw [thick,cyan, bend left] ({5+1.63*cos(135+180)},{1.63*sin(135+180)}) to ({5+1.7*cos(135+213.75)},{1.7*sin(135+213.75)});
         \draw [thick,cyan, bend left] ({5+1.63*cos(135+0)},{1.63*sin(135+0)}) to ({5+1.7*cos(135+33.75)},{1.7*sin(135+33.75)});
         \draw [thick,cyan, bend right] ({5+1.63*cos(135+270)},{1.63*sin(135+270)}) to ({5+1.7*cos(135+236.25)},{1.7*sin(135+236.25)});

          \draw[bend left] ({1.62*cos(0)},{1.62*sin(0)}) to ({1.62*cos(45)},{1.62*sin(45)});
          \draw [bend left] ({1.62*cos(45)},{1.62*sin(45)}) to ({1.62*cos(90)},{1.62*sin(90)});
          \draw [bend left] ({1.62*cos(90)},{1.62*sin(90)}) to ({1.62*cos(135)},{1.62*sin(135)});
          \draw [bend left] ({1.62*cos(135)},{1.62*sin(135)}) to ({1.62*cos(180)},{1.62*sin(180)});
          \draw [bend left] ({1.62*cos(180)},{1.62*sin(180)}) to ({1.62*cos(225)},{1.62*sin(225)});
          \draw [bend left] ({1.62*cos(225)},{1.62*sin(225)}) to ({1.62*cos(270)},{1.62*sin(270)});
          \draw [bend left] ({1.62*cos(270)},{1.62*sin(270)}) to ({1.62*cos(315)},{1.62*sin(315)});
          \draw [bend left] ({1.62*cos(315)},{1.62*sin(315)}) to ({1.62*cos(0)},{1.62*sin(0)});

          \draw ({1.62*cos(0)},{1.62*sin(0)})--({1.62*cos(180)},{1.62*sin(180)});
          \draw ({1.62*cos(45)},{1.62*sin(45)})--({1.62*cos(225)},{1.62*sin(225)});
          \draw ({1.62*cos(90)},{1.62*sin(90)})--({1.62*cos(270)},{1.62*sin(270)});
          \draw ({1.62*cos(135)},{1.62*sin(135)})--({1.62*cos(315)},{1.62*sin(315)});

         \draw (0,-2.5) node{$(i)$} (5,-2.5) node{$(ii)$};
\end{tikzpicture}
    \caption{$(i)$ Arcs with same color (except black) project onto a second systolic geodesic. $(ii)$ All second systolic geodesics on Bolza Surface.}
    \label{Fif4.1}
\end{figure}

\bibliographystyle{plain}
\bibliography{bibliography.bib}

\end{document}